\newtheorem{theorem}{Theorem}[section]
\newtheorem{lemma}[theorem]{Lemma}
\newtheorem{proposition}[theorem]{Proposition}
\newtheorem{corollary}[theorem]{Corollary}
\theoremstyle{definition}
\newtheorem{definition}[theorem]{Definition}
\newtheorem{ex}[theorem]{Example}
\newtheorem{remark}[theorem]{Remark}
\newtheorem{example}[theorem]{Example}
\newtheorem{examples}[theorem]{Examples}
\theoremstyle{remark}
\newcommand{\Si}{\mathfrak{S}}
\renewcommand{\hom}{\mathrm{hom}}
\newcommand{\Hom}{{\mathrm{Hom}}}
\newcommand{\eEnd}{{\mathrm{end}}}
\newcommand{\End}{{\mathrm{End}}}
\newcommand{\Id}{{\mathrm{Id}}}
\renewcommand{\k}{\Bbbk}
\newcommand{\Fct}{\mathrm{Fct}}
\newcommand{\op}{\mathrm{op}}
\newcommand{\ev}{\mathrm{ev}}
\newcommand{\I}{\mathrm{(I)}}
\newcommand{\II}{\mathrm{(II)}}
\newcommand{\M}{\texttt{M}}
\newcommand{\Q}{\texttt{Q}}
\newcommand{\A}{\mathcal{A}}
\newcommand{\B}{\mathcal{B}}
\newcommand{\C}{\mathcal{C}}
\newcommand{\E}{\mathcal{E}}
\newcommand{\U}{\mathcal{U}}
\newcommand{\W}{\mathcal{W}}
\newcommand{\Qc}{\mathcal{Q}}
\newcommand{\Sc}{\mathcal{S}}
\newcommand{\equi}{\stackrel{\sim}{\longrightarrow}}
\newcommand{\Z}{\mathbb{Z}}
\newcommand{\V}{\mathcal{V}}
\newcommand{\eV}{\mathfrak{V}}
\newcommand{\smod}{\mathsf{smod}}
\newcommand{\svec}{\mathsf{svec}}
\newcommand{\esmod}{\mathfrak{smod}}
\newcommand{\esvec}{\mathfrak{svec}}
\newcommand{\cosmod}{\mathsf{cosmod}}
\newcommand{\vvec}{\mathsf{vec}}
\newcommand{\evec}{\mathfrak{vec}}
\newcommand{\Pol}{\mathsf{Pol}}
\newcommand{\ePol}{\mathfrak{Pol}}
\newcommand{\salg}{\mathfrak{salg}}
\newcommand{\ssch}{\mathfrak{ssch}}
\newcommand{\pol}{\mathfrak{pol}}
\newcommand{\sdim}{\mathrm{sdim}}
\newcommand{\la}{\langle}
\newcommand{\ra}{\rangle}
\title[Spin polynomial functors]{Spin polynomial functors and representations of Schur superalgebras}
\author[J.~Axtell]{Jonathan~Axtell }
\address{Department of Mathematics, Seoul National University, 599 Gwanak-ro, Gwanak-gu, Seoul 151-747, Korea}
\email{jdaxtell@snu.ac.kr}
\thanks{This work was supported by NRF grant \#2011-0027952 and NRF grant \#2012-005700}
\date{\today}
\begin{document}

\sloppy

\maketitle
\begin{abstract}
We introduce categories of homogeneous strict polynomial functors, 
$\Pol^\I_{d,\k}$ and $\Pol^\II_{d,\k}$, defined on vector superspaces  
over a field $\k$ of characteristic not equal 2.  These categories are 
related to polynomial representations of the supergroups $GL(m|n)$ 
and $Q(n)$, respectively.  In particular, we prove an equivalence 
between $\Pol^\I_{d,\k}$, $\Pol^\II_{d,\k}$ and 
the category of finite dimensional supermodules over the Schur 
superalgebra $\Sc(m|n,d)$, $\Qc(n,d)$ respectively 
provided $m,n \ge d$.  We also discuss some aspects of Sergeev 
duality from the viewpoint of the category $\Pol^\II_{d,\k}$.
\end{abstract}
\medskip

\section{Introduction}
Strict polynomial functors were introduced by Friedlander and Suslin in \cite{FS} as 
a tool for use in their investigation of rational cohomology of finite group schemes 
over a field.  Let us briefly recall the definition.

Suppose $\k$ is an arbitrary field, and let $\vvec_\k$ denote the category of 
finite dimensional $\k$-vector spaces.  Also, let $\mathfrak{sch}_\k$ be the category 
of all schemes over $\k$.  Then, by identifying each hom-space with its associated 
affine scheme, we obtain an $\mathfrak{sch}_\k$-enriched category 
$\evec_\k$ (in the sense of \cite{Kelly}) with the same objects 
as $\vvec_\k$.  Although stated somewhat differently in \cite[Definition 2.1]{FS}, 
a {\em strict polynomial functor} may be defined as an 
$\mathfrak{sch}_\k$-enriched functor from $\evec_\k$ to itself. 
From this perspective, it is clear that a strict polynomial functor $T$ yields, 
by evaluation at any $V\in \vvec_\k$, a polynomial representation $T(V)$ 
of the affine group scheme $GL(V)$.  Let us denote by $pol_d(GL(V))$ the category 
of finite  dimensional polynomial representations of $GL(V)$ which are homogeneous 
of degree $d$.  Then a strict polynomial functor $T$ is said to be 
{\em homogeneous of degree $d$} if $T(V) \in pol_d(GL(V))$ 
for all $V\in \evec_\k$.  
We denote by $\mathcal{P}_d$ the category of all such homogeneous strict 
polynomial functors.  The morphisms in $\mathcal{P}_d$ are  
$\mathfrak{sch}_\k$-enriched natural transformations.

Assume that $n\ge d$.  Then, evaluation at $V=\k^n$ in fact gives an equivalence 
of categories $$\mathcal{P}_d\ \equi\ pol_d(GL_n).$$  This follows from the definition 
of the Schur algebra $S(n,d)$ in terms of the coordinate ring of $GL_n$ 
(as in Green's monograph \cite{Green})  and \cite[Theorem 3.2]{FS}, which 
provides an equivalence between $\mathcal{P}_d$ and the category of finite 
dimensional modules over $S(n,d)$.  
We remark that there is an alternate definition of the category $\mathcal{P}_d$ 
which makes the relationship with $S(n,d)$-modules more transparent 
(see e.g. \cite{Krause,P}).  
In this new definition, $\mathfrak{sch}_\k$-enriched functors are replaced by 
$\k$-linear functors defined on a category of {\em divided powers}.

The aim of this paper is to provide an analogue of \cite[Theorem 3.2]{FS} for 
Schur superalgebras.  More specifically, suppose now that $\k$ is a field of 
characteristic $p\neq 2$.  In this context, the 
Schur superalgebras $\mathcal{S}(m|n,d)$ and $\Qc(n,d)$ were 
studied by Donkin \cite{Donkin} and Brundan and Kleshchev 
\cite{BKprojective}, respectively.  In both works there was obtained a 
classification of finite dimensional irreducible supermodules over the 
corresponding Schur superalgebra.  (In \cite{BKprojective} the field $\k$ is
assumed to be algebraically closed.)  In this paper, we introduce categories of 
strict polynomial functors defined on vector superspaces, and we show that 
each such category is equivalent to the category of finite dimensional 
supermodules over one of the above Schur superalgebras. To define strict 
polynomial functors on superspaces, it is more convenient for us to follow the 
approach involving categories of divided powers.  In the last section, however, 
we provide a definition of strict polynomial functors as ``enriched functors" 
which is closer to Friedlander and Suslin's original definition.

The contents of the paper are as follows.  In Section \ref{sec:salg}, we give 
necessary preliminary results concerning superalgebras and supermodules.  
In Section \ref{sec:strictpoly}, we introduce the categories 
$\Pol^{(\dagger)}_{d} = \Pol^{(\dagger)}_{d,\k}$ ($\dagger= \mathrm{I, II}$)
 of homogeneous strict polynomial functors, whose objects are $\k$-linear 
functors defined on categories of vector superspaces.  We also discuss some 
of the usual facets of polynomial functors such as Kuhn duality and Yoneda's 
lemma in this new context. (See \cite{Krause, P, TouzeRingel} for descriptions 
of the corresponding classical notions).

In Section \ref{sec:schursalg}, we prove our main result, Theorem \ref{thm:main1}, 
which gives an equivalence between $\Pol^\I_{d}$, $\Pol^\II_{d}$ and the 
category of finite dimensional supermodules over $\mathcal{S}(m|n,d)$, 
$\Qc(n,d)$ respectively for $m,n \ge d$.  We are then able to obtain in a
classification of irreducible objects in both categories using the classifications of
\cite{Donkin} and \cite{BKprojective}.  
As another application of Theorem \ref{thm:main1}, we give an exact functor from 
the category $\Pol^\II_d$ to the category
of finite dimensional left supermodules over the Sergeev superalgebra $\W(d)$.  
This functor may be viewed as a categorical analogue of  {\em Sergeev duality}, as 
described by Sergeev in \cite{Sergeev} when $p=0$ and by Brundan and 
Kleshchev \cite{BKprojective} in arbitrary characteristic.  Since the representation 
theory of $\W(d)$ is closely related to that of the spin symmetric group algebra 
$\k^-\Si_d$ (c.f. \cite{BKprojective}), we may refer to objects of $\Pol^\II_d$ as 
{\em spin polynomial functors}.  

In Section \ref{sec:enrich}, we conclude by  describing categories $\ePol^\I_d$ and 
$\ePol^\II_d$ consisting of homogeneous {\em $\ssch_\k$-enriched functors}, 
where $\ssch_\k$ denotes the category of all superschems over $\k$.  This definition 
may be viewed as a ``super analogue" of Friendlander and Suslin's original definition 
of strict polynomial functors.  In Theorem \ref{thm:main3} we show that our two 
definitions of strict polynomial functors are equivalent.  One of the benefits of the 
classical approach is that the relationship between strict polynomial functors and 
polynomial representations of the supergroups $GL(m|n)$ and $Q(n)$ appears 
naturally from the definition of $\ssch_\k$-enriched functors.

Finally, let us mention our original motivation for considering categories of 
polynomial functors defined on vector superspaces.  In \cite{HTY}, J. Hong, A. Touz\'e 
and O. Yacobi showed that the  category of all classical polynomial functors 
$$\mathcal{P}=\bigoplus_{d\ge 0} \mathcal{P}_{d},$$ defined over an infinite field $\k$ 
of characteristic $p$, provides a categorification of level 1 Fock space representations 
(in the sense of Chuang and Rouquier) for an affine Kac-Moody algebra 
$\mathfrak{g}$ of type $A_\infty$ (if $p=0$) or of type $A^{(1)}_{p-1}$ (in case $p>0$).
We conjecture that the category of all spin polynomial functors 
$$\Pol^\II_\k= \bigoplus_{d\ge 0} \Pol^\II_{d,\k}$$ defined over an algebraically closed 
field $\k$ of characteristic $p \neq 2$ provides a categorification of certain level 1 Fock 
spaces for an affine Kac-Moody 
algebra of type $B_\infty$ (if $p=0$) or of type $A_{p-1}^{(2)}$ (if $p>2$).

\subsection*{Acknowledgements}
The author wishes to thank Masaki Kashiwara and Myungho Kim for many helpful 
conversations  and suggestions.  
\medskip

\section{Superalgebras and supermodules}\label{sec:salg}
In this section, we give preliminary results on superalgebras and supermodules 
needed for the remainder.  See \cite{BKprojective}, \cite[Ch.12-13]{Kleshchev}, 
\cite[Ch.1]{Leites} and \cite[Ch.3]{Manin} for more details. Although our notation 
sometimes differs from these references.

\subsection{Preliminaries}
Let us fix a field $\k$, which we assume is of characteristic $p\neq 2$.  
A {\em vector superspace} is a $\Z_2$-graded $\k$-vector space 
$M = M_{0}\oplus M_{1}$.  We denote the degree of a homogeneous 
vector, $v \in M$, by $|v| \in \Z_2 $.  A {\em subsuperspace} of 
$M$ is a subspace $N$ of $M$ such that $N = (N\cap M_{0})\oplus 
(N\cap M_{1})$.  We let $\overline{M}$ denote the underlying ordinary 
vector space of a given superspace $M$, and we write 
$\sdim(M) = (m, n)$ if ${\rm dim}(\overline{M_0}) = m$ and 
${\rm dim}(\overline{M_1}) = n$.  

Given a pair of vector superspaces $M, N$ we view the direct sum 
$M\oplus N$ and the tensor product $M\otimes N$ as vector 
superspaces by setting: $(M\oplus N)_i = M_i \oplus N_i$ 
($i\in \Z_2$), $(M\otimes N)_{0} = M_{0}\otimes N_{0} \oplus 
M_{1}\otimes N_{1}$ and $(M\otimes N)_{1} = M_{0}\otimes 
N_{1} \oplus M_{1}\otimes N_{0}$.  We also consider the vector 
space 
$\Hom(M,N) = \Hom_{\k}(M,N)$ of all $\k$-linear maps of $M$ into $N$ 
as a superspace by letting $\Hom(M,N)_i$ consist of the 
{\em homogeneous maps of degree $i$} for $i \in \Z_2$, i.e. the 
maps $f:M \rightarrow N$ such that $f_i(M_j) \subseteq N_{i+j}$ for 
$j \in \Z_2$.  The elements of $\Hom(M,N)_{0}$ are called 
{\em even} linear maps, and the elements of 
$\Hom(M,N)_{1}$ are called {\em odd}.   The $\k$-linear 
dual $M^\vee = \Hom(M,\k)$ is a superspace by viewing 
$\k$ as vector superspace concentrated in degree 0.  
Let $\svec_\k$ denote the category of all finite dimensional 
$\k$-vector superspaces  with arbitrary linear maps as
morphisms.

If $M\in \svec_\k$, then for $f\in M^\vee$ and $v\in M$, 
we write 
$$\la f,v\ra\ =\ f(v) \in \k$$ 
to denote the pairing between $M$ and $M^\vee$.
We identify $M$ with $(M^\vee)^\vee$ as superspaces
 by setting 
 \begin{equation}\label{eq:dual}
 \la v, f \ra\ =\  (-1)^{|v| |f|} \la f, v \ra
\end{equation}
for $v\in M, f\in M^\vee$.



A {\em superalgebra} is a  superspace $\mathcal{A}$ with the 
additional structure of an associative unital $\k$-algebra such 
that $\mathcal{A}_i \mathcal{A}_j \subseteq \mathcal{A}_{i+j}$ 
for $i, j \in \Z_2$.  By forgetting the grading we may consider 
any superalgebra $\mathcal{A}$ as an ordinary algebra, 
denoted by $\overline{\mathcal{A}}$.  A {\em superalgebra 
homomorphism} $\vartheta: \mathcal{A} \rightarrow 
\mathcal{B}$ is an even linear map that is an algebra 
homomorphism in the usual sense; its kernel is a 
{\em superideal}, i.e., an ordinary two-sided ideal 
which is also a subsuperspace.  An {\em antiautomorphism} 
$\tau: \mathcal{A} \rightarrow \mathcal{A}$ of a superalgebra 
$\mathcal{A}$ is an even linear map which satisfies 
$\tau(a b) = \tau(b) \tau(a)$. 

Given two superalgebras $\mathcal{A}$ and $\mathcal{B}$, 
we view the tensor product of superspaces 
$\mathcal{A}\otimes \mathcal{B}$ as a superalgebra with 
multiplication defined by
\begin{equation*}
(a\otimes b)(a' \otimes b') = (-1)^{|b| |a'|} (a a') \otimes (b b') \qquad (a,a' \in \mathcal{A}, b,b' \in \mathcal{B}).
\end{equation*}
We note that 
$\mathcal{A}\otimes \mathcal{B} \cong \mathcal{B} \otimes \mathcal{A}$, 
an isomorphism being given by 
\begin{equation*}
 a\otimes b \mapsto (-1)^{|a| |b|} b\otimes a \quad (a \in \mathcal{A}, b \in \mathcal{B}).
\end{equation*}

\subsection{Tensor powers} 
Let $M$ be a vector superspace. The {\em tensor superalgebra} $T^*M$ is 
the tensor algebra $$T^*M = \bigoplus_{d\ge 0} M^{\otimes d}$$ regarded 
as a vector superspace.  It is the free associative ($\Z$-graded) 
superalgebra generated by $M$.

The {\em symmetric superalgebra} $S^*M$ is the quotient of $T^*M$ by 
the super ideal 
$$\mathcal{I}= \langle x\otimes y - (-1)^{|x| |y|}y\otimes x ;\ x,y \in M \rangle.$$ 
Since $\mathcal{I}$ is a $\Z$-graded homogeneous ideal, there exists a gradation 
$S^*M = \bigoplus_{d \ge 0} S^d M$.
Now we may view the ordinary symmetric algebra $Sym^*\overline{M_0}$ as a 
superspace 
concentrated in degree zero.  We may also view the ordinary exterior algebra 
$\Lambda^*\overline{M_1}$ as a superspace by reducing its $\Z$-grading mod $2\Z$.  
In this way both $Sym^*\overline{M_0}$ and $\Lambda^*\overline{M_1}$ may be 
regarded as $\Z$-graded superalgebras.
One may check that we have a $\Z$-graded superalgebra isomorphism:
\begin{equation} \label{eq:sym1}
S^*M \ \cong \  Sym^*\overline{M_0}\otimes  \Lambda^*\overline{M_1}.
\end{equation}

A superalgebra $\mathcal{A}$ is called {\em commutative} if $ab = (-1)^{|a| |b|} ba$ for all 
$a,b \in \mathcal{A}$.   The superalgebra $S^*M$ is the free commutative ($\Z$-graded) 
superalgebra generated by $M$.

\subsection{Divided powers}\label{sec:divided}
There is a unique (even) right action of the symmetric group $\mathfrak{S}_d$ on the tensor 
power $M^{\otimes d}$ such that each transposition $(i\ i+1)$ for $1\le i \le d-1$ acts by: 
$ (v_1 \otimes \cdots \otimes v_d). (i\ i+1) \ = $
\[
\ (-1)^{|v_i| |v_{i+1}|} v_1\otimes \cdots \otimes v_{i+1} \otimes v_i \otimes \cdots \otimes v_d,
\]
for any $v_1, \dots, v_d \in M$ with $v_i, v_{i+1}$ $\Z_2$-homogeneous.  Denote the invariant 
subsuperspace of this action by
$$ \Gamma^d M:= (M^{\otimes d})^{\mathfrak{S}_d}.$$
Now the symmetric power is the coinvariant superspace 
$S^dM = (M^{\otimes d})_{\Si_d}$.
Hence, given arbitrary vector superspaces $V,W$ there are natural {\em even}
isomorphisms
\begin{equation*}
\Hom_{\Si_d}(V,M^{\otimes d}) \cong \Hom(V, \Gamma^dM),\ \   
\Hom_{\Si_d}(M^{\otimes d}, W) \cong \Hom(S^dM, W),
\end{equation*}
where $V$ and $W$ are considered as trivial $\Si_d$-modules.
There is also a right action of $\Si_d$ on $(M^{\otimes d})^\vee$ given by 
$(f.\sigma)(v) = f(v.\sigma^{-1})$, for $f\in (M^{\otimes d})^\vee, v\in M^{\otimes d}$ 
and $\sigma \in \Si_d$.  Furthermore, there is a natural even isomorphism
\begin{equation}\label{eq:duality1}
\Gamma^d(M)^\vee\ \cong S^d(M^\vee).
\end{equation}

Now let $\Gamma^*M$ be the $\Z$-graded superspace $\bigoplus_{d\ge 0} \Gamma^d M$.
Also let $D^*\overline{M_0}$ denote the ordinary divided powers algebra of the vector space 
$\overline{M_0}$ (cf. \cite{Bourbaki}).  Viewed as a vector superspace concentrated in degree 
zero,  $D^*\overline{M_0}$ is a $\Z$-graded superalgebra.  Also note that we have a natural 
embedding of superspaces: $\Lambda^d \overline{M_1} \hookrightarrow (M_1)^{\otimes d}$.
We then have an even isomorphism of $\Z$-graded superspaces
\begin{equation}\label{eq:div1}
\Gamma^*(M_0) \cong D^*\overline{M_0} \otimes \Lambda^*\overline{M_1}.
\end{equation} 
The isomorphism (\ref{eq:div1}) defines a superalgebra structure on $\Gamma^*M$ which we 
call the  {\em divided power superalgebra}.

\subsection{Supermodules}
Let $\mathcal{A}$ be a superalgebra.  A {\em left $\mathcal{A}$-supermodule} is a superspace $V$ 
which is a left $\mathcal{A}$-module in the usual sense, such that 
$\mathcal{A}_i V_j \subseteq \mathcal{A}_{i+j}$ for $i,j \in \Z_2$.   One may similarly define 
{\em right $\mathcal{A}$-supermodules}.  A {\em homomorphism} $\varphi: V \rightarrow W$ of left 
$\mathcal{A}$-supermodules $V$ and $W$ is a (not necessarily homogeneous) linear map such that 
\[
\varphi(av) =(-1)^{|\varphi| |a|} a \varphi(v) \qquad (a \in \mathcal{A}, v \in V).
\]
We denote by $_\mathcal{A}\smod$ the category of finite dimensional left $\mathcal{A}$-supermodules 
with $\mathcal{A}$-homomorhpisms.  
A {\em homomorphism}, $\varphi: V \rightarrow W$, of right $\mathcal{A}$-supermodules $V$ and $W$ 
is a (not necessarily homogeneous) linear map such that
\[
\varphi(v a) = \varphi(v) a \qquad (a \in \mathcal{A}, v \in V).
\]
Let $\smod_{\mathcal{A}}$ denote the category of finite dimensional right $\mathcal{A}$-supermodules 
with $\mathcal{A}$-homomorphisms.  

\subsection{Parity change functor}
Suppose $V$ is a left or right $\mathcal{A}$-supermodule.  Then define a new supermodule $\Pi V$ which 
is the same vector space as $V$ but with opposite $\Z_2$-grading.  For right supermodules, the new right 
action is the same as in $V$.  For left supermodules, the new left action of $a\in \mathcal{A}$ on 
$v\in \Pi V$ is defined in terms of the old one by $a\cdot v := (-1)^{|a|}av$.  On a morphism $f$, $\Pi f$ is the 
same underlying linear map as $f$.  
Let us write $\k^{m|n} = \k^m \oplus (\Pi \k)^n$.

 \begin{examples}\label{ex:findimsalg}  We have the following examples of finite dimensional associative 
 superalgebras.
\smallskip
\begin{itemize}
\item[(i)]  If $M$ is a superspace, then $\End(M) = \Hom_\k(M,M)$ is a superalgebra.  In particular, we 
write $\mathcal{M}_{m,n} = \End(\k^{m|n})$.   
\smallskip
\item[(ii)]  Let $V\in \svec_\k$, and suppose $J$ is a degree one involution in $\End(V)$.  This is possible 
if and only if $\dim \overline{V_0} = \dim \overline{V_1}$.  Let us consider the superalgebra
$$ \Qc(V,J) = \{ \varphi \in \End(V);\ \varphi J = (-1)^{|\varphi|} J \varphi \}.$$
Suppose that $\sdim V = (n,n)$, and let $\{v_1, \dots, v_n\}$ (resp. $\{v_1', \dots, v_n'\}$) a basis of $V_0$ 
(resp. $V_1$).  Let $J_V$ be the unique involution in $\End_\k(V)$ such that $J v_i = v_i'$ for $1\le i \le n$.  
Then we may write elements of $\Qc(V,J_V)$ with respect to the basis 
$\{v_1, \dots, v_n, v_1',\dots, v_n'\}$ as matrices of the form
\begin{equation}\label{eq:typeQ}\left( \begin{array}{cc}
A& B \\
-B & A 
\end{array} \right), \end{equation} where $A,B$ are $n\times n$ matrices, with $A=0$ for odd endomorphisms 
and $B=0$ for even ones.  

Suppose that $\k$ is algebraically closed.  Recall (cf. \cite[ch.12]{Kleshchev}) that all odd involutions $J \in \End(V)$
  are then
mutually conjugate (by an invertible element of $\End(V)_0$).  Hence, any superalgebra $\Qc(V,J)$ 
is isomorphic to the superalgebra $\Qc_n$, consisting of all matrices of the form (\ref{eq:typeQ}).
\smallskip
\item[(iii)]  The {\em Clifford superalgebra}, $\C(d)$, is the superalgebra generated by odd elements 
$c_1, \dots, c_d$ subject to the relations $c_i^2=1$ for $i=1, \dots, d$ and $c_i c_j = -c_j c_i$ for all $i\neq j$.  
There is an isomorphism
\[
\C(d_1+d_2) \simeq \C(d_1) \otimes \C(d_2),
\]
defined by mapping $c_i \mapsto c_i\otimes 1$ and $c_{d_1+j} \mapsto 1\otimes c_j$, 
for $1\leq i\leq d_1$ and $1\leq j \leq d_2$.  Hence, we have
\begin{equation}\label{eq:tensor}
\C(d) \simeq \C(1) \otimes \cdots \otimes \C(1) \quad (d\ {\rm copies}).
\end{equation}
\end{itemize}
\end{examples}

\subsection{Categories enriched over $\svec_\k$}
We say a category $\V$ is an {\em $\svec_\k$-enriched category} 
if the hom-sets $\hom_\V(V,W)$  ($V,W\in \V$) are finite 
dimensional $\k$-superspaces while composition is bilinear and 
{\em even}.  I.e., if $U,V,W \in \V$, then composition induces an even 
linear map:
$${\rm hom}_{\V}(V,W) \otimes {\rm hom}_{\V}(U,V) \rightarrow {\rm hom}_{\V}(U,W).$$
We will write $$V \cong W,$$ if  $V,W$ are isomorphic in $\V$.  If there 
is an even isomorphism $\varphi:V \cong W$ (i.e., 
$\varphi\in {\rm hom}_\V(V,W)_0$), we use the notation 
$$V \simeq W.$$
Let $\V_\ev$ denote the subcategory of $\V$ consisting of the same 
objects but only even homomorphisms.

For a superalgebra $\mathcal{A}$, the categories $_\A \smod$ and $\smod_\A$ are 
naturally $\svec_\k$-enriched categories.  Furthermore, the subcategories 
$(_\A \smod)_{\rm ev}$ and $(\smod_\mathcal{A})_{\rm ev}$ are abelian 
categories in the usual sense.  This allows us to make use of the basic notions of 
homological algebra by restricting our attention to only even morphisms.  For 
example, by a short exact sequence in $_\A\smod$ 
(resp. $\smod_\A$), we mean a sequence
\[
0 \rightarrow V_1 \rightarrow V_2 \rightarrow V_3 \rightarrow 0,
\]
with all the maps being {\em even}.  All functors between the $\svec_\k$-enriched 
categories which we consider will send even morphisms to even morphisms.  So they 
will give rise to the corresponding functors between the underlying even 
subcategories.

Now if $\V$ is an $\svec_\k$-enriched category, let $\V^-$ denote the category with 
the same objects and morphisms as $\V$ but with modified composition law: 
$\varphi\circ \varphi' = (-1)^{|\varphi| |\varphi'|} \varphi \varphi'$, where 
$\varphi \varphi'$ denotes composition in $\V$.  

\begin{example}\label{ex:k minus}
If $f\in \Hom(M,N)$ for some $M,N \in \svec_\k$, we let $f^-:M\rightarrow N$
be the linear operator defined by
$$f^-(v)= (-1)^{|f| |v|} f(v).$$
It can be checked that mapping $M\mapsto M$ and $f\mapsto f^-$
for all $M,N\in \svec_\k$, $f\in \Hom(M,N)$ gives an equivalence
\begin{equation}\label{eq:k minus}
 (\svec_\k)^- \equi \svec_\k.
 \end{equation}
\end{example}

Given a superaglebra $\A$, also define a new superalgebra $\A^-$, with the 
same elements as $\A$ and modified multiplication law 
$a\cdot b = (-1)^{|a| |b|}ab$.  Notice that for any $V\in \V$, the superspace 
$\eEnd_\V(V) = \hom_\V(V,V)$ is a superalgebra and
\begin{equation}\label{eq:minus1}
\eEnd_{\V^-}(V) = \eEnd_\V(V)^-.
\end{equation}

\subsection{Schur's lemma}
It is possible that an irreducible $\A$-supermodule may become reducible 
when considered as an $\overline{\A}$-module.  We say that an 
irreducible left $\mathcal{A}$-supermodule $V$ is of {\em type $\M$} if the left 
$\overline{\A}$-module $\overline{V}$ is irreducible, and otherwise we say 
that $V$ is of {\em type $\Q$}.  We have the following criterion.

\begin{lemma}[Schur's lemma]\label{lem:Schur}
Suppose $\A$ is a superalgebra, and let $V$ be a finite dimensional irreducible left 
$\mathcal{A}$-supermodule.  Then 
$$\dim \overline{\End_\A(V)} = \begin{cases}
1 & \mbox{if $V$ is of type $\M$},\\
2 & \mbox{if $V$ is of type $\Q$}.
\end{cases}$$
\end{lemma}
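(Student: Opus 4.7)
The plan is to show that $D := \End_\A(V)$ is a super division algebra of total dimension at most $2$, and then match the two cases with type $\M$/$\Q$. The first step is a standard super-Schur argument: for any nonzero homogeneous $\varphi \in D$, the kernel and image are graded subspaces of $V$, hence sub-supermodules, so by irreducibility they are $0$ or $V$, and $\varphi$ is invertible. In particular $D_0$ is a finite-dimensional division $\k$-algebra, and equals $\k \cdot \Id_V$ under the (implicit) hypothesis that $\k$ is algebraically closed. If $D_1 \neq 0$, left multiplication by any nonzero $J \in D_1$ is a $\k$-linear bijection $D_0 \to D_1$, giving $\dim D_1 = 1$; hence $\dim \overline{D} \in \{1,2\}$.

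The heart of the argument is the equivalence $D_1 = 0 \iff V$ is of type $\M$. The key technical device is the twist $\tilde f(v) := (-1)^{|f||v|} f(v)$, which converts a homogeneous $\A$-supermodule endomorphism $f$ into an ordinary $\overline{\A}$-linear map $\tilde f$ on $\overline V$. For the direction $\M \Rightarrow D_1 = 0$: given $J \in D_1$, the twisted map $\tilde J$ is $\overline{\A}$-linear on the irreducible $\overline{\A}$-module $\overline V$, so the classical Schur lemma forces $\tilde J \in \k \cdot \Id$; but $\tilde J$ reverses the $\Z_2$-grading while any scalar preserves it, so $\tilde J = 0$ and $J = 0$.

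For the converse $\Q \Rightarrow D_1 \neq 0$, I would pick a nonzero proper $\overline{\A}$-submodule $W \subsetneq \overline V$ and introduce the parity operator $P(v) := (-1)^{|v|}v$. A short computation yields $a \cdot P(w) = (-1)^{|a|} P(aw)$, so $P(W)$ is again an $\overline{\A}$-submodule. Then $W + P(W)$ and $W \cap P(W)$ are $P$-stable $\overline{\A}$-submodules; $P$-stability is equivalent to being graded, so both are sub-supermodules of $V$, and super-irreducibility forces $\overline V = W \oplus P(W)$ with $W$ proper and nonzero. The projection $\pi$ onto $W$ along this decomposition is a nonzero $\overline{\A}$-endomorphism of $\overline V$ which fails to preserve the grading (since $W$ is not graded), and its grading-reversing component yields, via the inverse twist, a nonzero odd element of $D$.

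The main obstacle throughout is the careful sign bookkeeping in the twist; once the identification ``odd $\A$-supermodule endomorphism'' $\leftrightarrow$ ``grading-reversing $\overline{\A}$-linear map'' is firmly established, the lemma reduces to the super division algebra structure combined with the classical Schur lemma applied to $\overline V$.
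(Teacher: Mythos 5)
The paper does not actually prove this lemma: it is stated as a standard fact, with the theory of superalgebras deferred to \cite{BKprojective}, \cite[Ch.~12--13]{Kleshchev}, \cite{Leites} and \cite{Manin}. So there is no in-paper proof to compare against, and your proposal should be judged on its own merits.

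Your argument is correct and complete. The twist $\tilde f(v)=(-1)^{|f||v|}f(v)$ is exactly the right device: with the paper's sign convention $\varphi(av)=(-1)^{|\varphi||a|}a\varphi(v)$ for supermodule homomorphisms, one checks $\tilde f(av)=a\tilde f(v)$, so homogeneous elements of $\End_\A(V)$ correspond bijectively to grading-preserving (even) or grading-reversing (odd) elements of $\End_{\overline\A}(\overline V)$. Given that, your two directions go through: on the $\M$ side a twisted odd map is a scalar by classical Schur, hence zero because scalars preserve grading; on the $\Q$ side, taking a proper nonzero $\overline\A$-submodule $W$, the identity $aP(w)=(-1)^{|a|}P(aw)$ shows $P(W)$ is a submodule, $P$-stability of $W\cap P(W)$ and $W+P(W)$ forces $\overline V=W\oplus P(W)$ by super-irreducibility (one should also observe $W\neq P(W)$, since otherwise $W$ would be graded and hence a proper nonzero sub-supermodule), and the odd component $\pi_1$ of the projection onto $W$ is a nonzero $\overline\A$-linear grading-reversing map whose inverse twist lands in $\End_\A(V)_1$. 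The preliminary super division algebra step is also fine, including that $J^{-1}\in D_1$ so that left multiplication by $J$ is a bijection $D_0\to D_1$.

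One remark worth keeping in mind: as you note, the dimension count $\dim D_0=1$ uses that $\k$ is algebraically closed, a hypothesis the paper never states globally (it only assumes $\operatorname{char}\k\neq 2$ and invokes algebraic closure locally, e.g.\ in Example~2.2(ii) and Corollary~4.3). Over a non--algebraically-closed field $D_0=\End_\A(V)_0$ is merely a finite-dimensional division $\k$-algebra and the stated dimensions $1$ and $2$ can fail. So your flagging of this as an implicit hypothesis is not a defect of your proof; it is a genuine imprecision inherited from the paper's statement.
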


\begin{example}
The superspace $\k^{m|n}$ is naturally an irreducible left $\mathcal{M}_{m,n}$-supermodule 
of type $\M$.  On the other hand, the superspace $V= \k^{n|n}$ is naturally an irreducible left 
$\Qc_n$-supermodule.  Since $\dim \overline{\End_{\Qc_n}(V)} > 1$, it 
follows that $V$ is of type $\Q$.  This explains the given names for the types.
\end{example}

\subsection{Wedderburn's theorem}
If $V,W \in{}_\mathcal{A}\smod$ (resp. $\smod_\mathcal{A}$), we let $\Hom_\mathcal{A}(V,W)$ 
denote the set of $\mathcal{A}$-homomorphisms from $V$ to $W$.  Also let $\End_\A(V)$ 
denote the superalgebra of all $\A$-supermodule endomorphisms of $V$.  Given a finite 
dimensional superalgebra $\mathcal{A}$ and some $V \in{}_\mathcal{A}\smod$ 
(resp. $\smod_\mathcal{A}$), we have a natural isomorphism
\begin{equation}\label{eq:superalg}
{\rm Hom}_{\mathcal{A}}(\mathcal{A}, V) \simeq V
\end{equation}
of vector superspaces.

Let $\mathcal{A}$ be a superalgebra.  A {\em subsupermodule} of a left (resp. right) 
$\mathcal{A}$-supermodule is a left (resp. right) $\mathcal{A}$-submodule, in the 
usual sense, which is also a subsuperspace.  A left (resp. right) 
$\mathcal{A}$-supermodule is {\em irreducible} if it is non-zero and has no 
non-zero proper subsupermodules.  We say that a left (resp. right) 
$\mathcal{A}$-supermodule is {\em completely reducible} if it can be decomposed 
as a direct sum of irreducible subsupermodules.  Call $\mathcal{A}$ {\em simple} if 
$\mathcal{A}$ has no non-trivial superideals, and a {\em semisimple superalgebra} 
if $\mathcal{A}$ is completely reducible viewed as a left $\mathcal{A}$-supermodule.  
Equivalently, $\mathcal{A}$ is semisimple if {\em every} left 
$\mathcal{A}$-supermodule is completely reducible.  We have:

\begin{theorem} \label{thm:Wedb}
Let $\mathcal{A}$ be a finite dimensional superalgebra.  The following are 
equivalent:
\begin{itemize}
\item[\rm(i)] $\mathcal{A}$ is semisimple;
\item[\rm(ii)] every left (resp. right) $\mathcal{A}$-supermodule is completely reducible;
\item[\rm(iii)] $\mathcal{A}$ is a direct product of finitely many simple 
superalgebras.
\end{itemize}
\end{theorem}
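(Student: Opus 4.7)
The plan is to adapt the classical Artin--Wedderburn theorem to the super setting, making systematic use of the super Schur lemma (Lemma \ref{lem:Schur}) and the natural isomorphism $\Hom_\mathcal{A}(\mathcal{A}, V) \simeq V$ from (\ref{eq:superalg}). I will establish the cycle (ii) $\Rightarrow$ (i) $\Rightarrow$ (iii) $\Rightarrow$ (ii); the first implication is immediate since $\mathcal{A}$ is itself a left $\mathcal{A}$-supermodule.

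For (i) $\Rightarrow$ (ii), choose a homogeneous generating set of any finite dimensional left $\mathcal{A}$-supermodule $V$ to obtain an even surjection $\mathcal{A}^{\oplus n} \twoheadrightarrow V$. Since $\mathcal{A}$ is completely reducible over itself, so is $\mathcal{A}^{\oplus n}$, and the usual Zorn's lemma arguments---applied throughout to the lattice of \emph{subsupermodules}---show that completely reducible supermodules are stable under passing to sub- and quotient supermodules. Hence $V$ is completely reducible.

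For (i) $\Rightarrow$ (iii), decompose $\mathcal{A} = \bigoplus_{i=1}^{k} L_i^{\oplus n_i}$ as a left supermodule, where the $L_i$ represent the finitely many isomorphism classes (in the sense of $\cong$, i.e.\ possibly via odd isomorphisms) of irreducible subsupermodules appearing. For $i \ne j$, any nonzero homomorphism $L_i \to L_j$ of either parity is an isomorphism because its kernel and image are subsupermodules; hence $\Hom_\mathcal{A}(L_i, L_j) = 0$, and (\ref{eq:superalg}) yields
\[
\mathcal{A}^{\op} \;\cong\; \End_\mathcal{A}(\mathcal{A}) \;\cong\; \prod_{i=1}^{k} \End_\mathcal{A}\bigl(L_i^{\oplus n_i}\bigr).
\]
Each factor is a matrix-type superalgebra over the division superalgebra $D_i := \End_\mathcal{A}(L_i)$, whose underlying ordinary algebra has dimension $1$ (type $\M$) or $2$ (type $\Q$) by Lemma \ref{lem:Schur}. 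In both cases a direct check shows that this matrix superalgebra has no nontrivial superideals, so each factor of $\mathcal{A}^{\op}$---and hence each factor of $\mathcal{A}$ itself---is simple.

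For (iii) $\Rightarrow$ (ii), complete reducibility of all supermodules passes through finite products via central idempotents, so it suffices to treat a single finite dimensional simple superalgebra $\mathcal{B}$. Let $L \subseteq \mathcal{B}$ be a minimal nonzero left subsupermodule; then $\sum_{a \in \mathcal{B}} La$ is a two-sided superideal, and thus equals $\mathcal{B}$ by simplicity. Each nonzero $La$ is an image of $L$ under right multiplication, hence irreducible, so $\mathcal{B}$ is completely reducible as a left supermodule over itself, and applying the already established (i) $\Rightarrow$ (ii) finishes the argument. The main obstacle throughout is parity bookkeeping; most crucially, the matrix superalgebra over the $2$-dimensional division superalgebra arising in the type $\Q$ case must still be shown simple \emph{as a superalgebra} (even though it may fail to be simple as an ungraded algebra), and this is precisely the point where the super theory genuinely departs from the classical one.
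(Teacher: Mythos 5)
The paper does not prove Theorem~\ref{thm:Wedb}; it is cited as a standard result, with the relevant background in \cite{Kleshchev}. Your reconstruction is the classical Artin--Wedderburn cycle transported to the $\Z_2$-graded setting, and it is essentially correct: the trivial (ii)$\Rightarrow$(i), the surjection $\A^{\oplus n}\twoheadrightarrow V$ together with closure of complete reducibility under sub- and quotients (computed in the lattice of subsupermodules) for (i)$\Rightarrow$(ii), the isotypic decomposition plus (\ref{eq:superalg}) for (i)$\Rightarrow$(iii), and the minimal-left-superideal argument for (iii)$\Rightarrow$(ii) all go through.

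One point deserves flagging. In (i)$\Rightarrow$(iii) you invoke Lemma~\ref{lem:Schur} to pin down $\dim \overline{D_i}\in\{1,2\}$ and then verify simplicity of the two resulting matrix superalgebras directly. That dimension count tacitly assumes $\k$ algebraically closed (for $\k=\mathbb R$ and the purely even superalgebra $\A=\mathbb C$, the supermodule $\A$ is irreducible of type $\M$ yet $\dim_\k\overline{\End_\A(\A)}=2$), whereas Theorem~\ref{thm:Wedb} carries no such hypothesis. The fix costs nothing: the homogeneous Schur lemma gives only that $D_i=\End_\A(L_i)$ is a finite-dimensional division superalgebra, and a matrix superalgebra over \emph{any} such $D_i$ has no nontrivial superideals by the usual elementary-matrix manipulations --- nowhere does one need $\dim D_i\le 2$. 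A smaller, cosmetic point: in the decomposition $\A=\bigoplus_i L_i^{\oplus n_i}$ with grouping up to $\cong$, the type-$\M$ block is really $L_i^{\oplus a_i}\oplus(\Pi L_i)^{\oplus b_i}$ with $a_i+b_i=n_i$, yielding $\mathcal{M}_{a_i,b_i}$ as the factor of $\End_\A(\A)$ rather than a purely even matrix algebra, while for type $\Q$ one gets $\Qc_{n_i}$; this is exactly how the simple factors match Example~\ref{ex:findimsalg}, but your notation obscures the parity structure of the block.
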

\smallskip
\begin{ex}\label{ex:cliff2}
The Clifford superalgebra $\C(1)$ may be realized as the 
superalgebra of $2\times2$ matrices of the form $\left\{ 
 \left( \begin{array}{cc}
a& b \\
b& a 
\end{array} \right)\ \Big|\ a,b \in \k \right\}$.  
The generator $c_1$ of $\C(1)$ corresponds to the matrix 
$J_1= \left( \begin{array}{cc}
0& 1 \\
1& 0 
\end{array} \right)$.  
One may check that $\C(1)$ is a simple superalgebra with 
a unique right (resp. left) irreducible supermodule up to isomorphism.  
In fact, $\C(1)$ is an irreducible supermodule over itself 
with respect to right (resp. left) multiplication, and we denote this 
supermodule by $ \U_r(1)$ (resp. $\U_l(1)$). In 
the sequel, we usually write $\U(1)=\U_r(1)$.

Suppose that $V\in {}_{\C(1)}\smod$, $V'\in \smod_{\C(1)}$.
By Theorem \ref{thm:Wedb}, 
there exist decompositions $V \simeq \U_l(1)^{\oplus n}$ 
and $V'\simeq \U(1)^{\oplus n'}$, for some 
$n, n' \in \Z_{\ge 0}$.
Hence, we have $\sdim(V)=(n,n)$ and $\sdim(V')=(n',n')$, and 
there exists a basis of $V$ (resp. $V'$) such that $c_1 \in \C(1)$ acts on 
$V$ (resp. $V'$) via multiplication by the matrix 
\begin{equation}\label{eq:c1gen}
 \left( \begin{array}{cc}
0 & I_N \\
I_N& 0 
\end{array} \right),
\end{equation}
where $I_N$ is the $N\times N$ unit matrix for $N=n,n'$ respectively.

Now let $V,W \in  {}_{\C(1)}\smod$ (resp. $\smod_{\C(1)}$).  
As mentioned above, we may assume that $\sdim(V) =(m,m)$ (resp. 
$\sdim(W)=(n,n))$ for some $m,n \in \Z_{\ge 0}$.  
By equation (\ref{eq:c1gen}), we may choose respective bases of $V$ and 
$W$ such that ${\rm Hom}_{\C(1)}(V,W)$ consists of all matrices of 
the form
\begin{equation}\label{eq:C1homs}
 \left( \begin{array}{cc}
A & B \\
-B& A 
\end{array} \right) \quad \left(\mbox{resp. }
 \left( \begin{array}{cc}
A & B \\
B& A 
\end{array} \right) \right),
\end{equation}
where $A, B$ are $n\times m$ matrices in $\k$, and $A=0$ (resp. $B=0$) for 
odd (resp. even) homomorphisms.
\end{ex}

\begin{remark}\label{rem:cliff1}
Notice that $\overline{\C(1)}$ is commutative as an ordinary algebra
even though $\C(1)$ is not a commutative superalgebra.  Hence, the
objects of $_{\C(1)}\smod$ can be identified with the objects of 
$\smod_{\C(1)}$.  It can be checked using (\ref{eq:C1homs}) that we 
have an equivalence
\begin{equation}\label{eq:minus2} 
(_{\C(1)}\smod)^- \equi \smod_{\C(1)},
\end{equation}
given by mapping $V\mapsto V$ and $\varphi\mapsto \varphi^-$ for all
$V,W\in ({}_{\C(1)}\smod)^-$ and $\varphi\in \Hom_{\C(1)}(V,W)$.
\end{remark}

\begin{remark}\label{rem:cliff2}
Suppose that $V \in {}_{\mathcal{C}(1)}\smod$ and $\sdim(V)=(n,n)$.  
Then it is clear from (\ref{eq:C1homs}) that we have a superalgebra 
isomorphism $\mathcal{Q}_n \cong \End_{\mathcal{C}(1)}(V)$.  
Now suppose that there is a $\sqrt{-1}\in \k$.
If $V' \in  \smod_{\mathcal{C}(1)}$ and again $\sdim(V')=(n,n)$,
then it is not difficult to check that we also have an isomorphism  
$\mathcal{Q}_n \cong \End_{\mathcal{C}(1)}(V')$ of superalgebras.
\end{remark}

\subsection{Wreath products}
Suppose $\A$ is an associative superalgebra.  Notice that 
the right action of $\sigma \in \Si_d$ on the tensor power 
$\A^{\otimes d}$ is in fact a superalgebra automorphism.    
Denote by $\A \wr \Si_d$ the vector superspace
 \begin{equation*}
 \A\wr\Si_d = \k \Si_d \otimes \A^{\otimes d}
\end{equation*}
(where the group algebra $\k \Si_d$ is viewed as superspace 
concentrated in degree zero).  
We then consider $\A\wr\Si_d$ as a superalgebra with 
multiplication defined by the rule
\[
(\sigma \otimes a)(\sigma' \otimes b) = \sigma \sigma' \otimes (a\cdot w') b
\]
for $\sigma, \sigma' \in \Si_d$, $a,b \in \A$.  
In what follows, we will 
identify $\A^{\otimes d}$ (resp.\ $\k\Si_d$) with the subsuperalgebra 
$1\otimes \A^{\otimes d}$ (resp. $\k \Si_d \otimes 1$) of $\A\wr\Si_d$.

\begin{example}[Sergeev superalgebra]
If $\A = \k$, then $\k \wr \Si_d = \k \Si_d$, the group 
algebra of $\Si_d$.  On the other hand, if we identity $C(d)$
with $\C(1)^{\otimes d}$ via the isomorphism (\ref{eq:tensor}),
then $\C(1) \wr \Si_d = \W(d),$ the {\em Sergeev superalgebra}
(cf. \cite{BKprojective}). 
\end{example}

\subsection{Tensor products of supermodules}\label{subsec:tensor}
Given left supermodules $V$ and $W$ over arbitrary superalgebras 
$\mathcal{A}$ and $\mathcal{B}$ respectively, the tensor 
product $V\otimes W$ is a left $\mathcal{A}\otimes \mathcal{B}$-supermodule 
with action defined by 
$(a\otimes b).(v\otimes w) = (-1)^{|b| |v|} a.v \otimes b.w$, for all 
homogeneous $a \in \mathcal{A}, b\in \mathcal{B}, v\in V, w\in W$.  
(Analogously, if $V$ and $W$ are right supermodules, the action of 
$\mathcal{A}\otimes \mathcal{B}$ on $V\otimes W$ is given by 
$(v\otimes w).(a\otimes b) = (-1)^{|w| |a|} v.a \otimes w.b$, for all 
homogeneous $a \in \mathcal{A}, b\in \mathcal{B}, v\in V, w\in W$.)   
If $\varphi: V\rightarrow V'$ (resp. $\varphi':W \rightarrow W'$) is a 
homogeneous homomorphism of left $\mathcal{A}$- (resp. 
$\mathcal{B}$-) supermodules, then 
$\varphi\boxtimes \varphi': V\otimes W \rightarrow V'\otimes W'$ is a 
homomorphism of left $\mathcal{A}\otimes \mathcal{B}$-supermodules, 
where  
\begin{equation}\label{eq:outer}
 \varphi \boxtimes \varphi' (v\otimes w) = (-1)^{|\varphi'| |v|}\varphi v \otimes \varphi' w.
\end{equation}  
(The previous statement holds also for right supermodules.  I.e., the 
outer tensor product $\varphi\boxtimes \varphi'$ of right supermodule 
homomorphisms, $\varphi:V\rightarrow W$ and 
$\varphi':V'\rightarrow W'$, is given by the same formula (\ref{eq:outer}).)

As a particular example, if $M,M',N,N' \in \svec_\k$, then (\ref{eq:outer}) 
gives a natural isomorphism
\begin{equation}\label{eq:isom3}
\Hom_\k (M, N)\otimes \Hom_\k (M', N')\ \simeq\ \Hom_\k (M\otimes M', N\otimes N'),
\end{equation}
which sends $f \otimes f' \mapsto f \boxtimes f'$.
More generally, we have the following.
\begin{lemma}\label{lem:tensor}
Suppose $\B$ is a simple finite dimensional superalgebra.
If $V,W \in \smod_\B$, then there is a canonical isomorphism
\begin{equation}\label{eq:isom4}
  \Hom_\B(V,W)^{\otimes d}\ \simeq\ \Hom_{\B^{\otimes d}}(V^{\otimes d}, W^{\otimes d}),
  \end{equation}
which maps $f_1 \otimes \cdots \otimes f_d$ onto $f_1 \boxtimes \cdots \boxtimes f_d$.
\end{lemma}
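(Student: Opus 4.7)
The plan is to construct the natural even linear map
\[
\Phi\colon \Hom_\B(V,W)^{\otimes d} \longrightarrow \Hom_{\B^{\otimes d}}(V^{\otimes d}, W^{\otimes d}),\qquad f_1\otimes\cdots\otimes f_d \longmapsto f_1\boxtimes\cdots\boxtimes f_d,
\]
and then show it is bijective. Well-definedness follows by induction on $d$ from formula (\ref{eq:outer}), since the outer tensor product of two right $\B$-linear maps is right $(\B\otimes\B)$-linear (a direct verification with the signs in (\ref{eq:outer}) and the right $(\B\otimes\B)$-action). For injectivity, I would observe that $\Phi$ is the restriction of the iterated canonical isomorphism from (\ref{eq:isom3}), which gives $\Hom_\k(V,W)^{\otimes d}\simeq \Hom_\k(V^{\otimes d}, W^{\otimes d})$; since $\Hom_\B(V,W)\subseteq \Hom_\k(V,W)$ is an inclusion of subsuperspaces, tensoring preserves inclusion and $\Phi$ is therefore injective.

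The remaining task is surjectivity, which I would establish by reducing to the case where $V$ and $W$ are irreducible. Since $\B$ is simple, Theorem \ref{thm:Wedb} implies that every object of $\smod_\B$ decomposes as a direct sum of subsupermodules isomorphic to $\U$ or $\Pi\U$, where $\U$ is the (essentially unique) irreducible right $\B$-supermodule. Both sides of the proposed isomorphism are additive in each tensor factor compatibly with $\Phi$, so I would first generalize the statement to
\[
\Hom_\B(V_1, W_1)\otimes\cdots\otimes \Hom_\B(V_d, W_d)\ \simeq\ \Hom_{\B^{\otimes d}}(V_1\otimes\cdots\otimes V_d,\, W_1\otimes\cdots\otimes W_d),
\]
and then reduce to $V_i, W_i \in \{\U, \Pi\U\}$; up to parity shifts it suffices to take all $V_i = W_i = \U$.

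In this case Schur's lemma (Lemma \ref{lem:Schur}) identifies $\eEnd_\B(\U)$ with a superdivision algebra $\mathcal{D}$ of total dimension $1$ in type $\M$ or $2$ in type $\Q$, so the left-hand side is the superalgebra $\mathcal{D}^{\otimes d}$. The outer tensor product (\ref{eq:outer}) gives a superalgebra homomorphism $\mathcal{D}^{\otimes d} \to \eEnd_{\B^{\otimes d}}(\U^{\otimes d})$ whose image is exactly $\Phi(\Hom_\B(\U,\U)^{\otimes d})$, and I would conclude by matching dimensions on the two sides. In type $\M$, the tensor power $\U^{\otimes d}$ remains irreducible over $\B^{\otimes d}$ with endomorphism superalgebra $\k$, matching $\mathcal{D}^{\otimes d}=\k$. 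In type $\Q$, one identifies $\eEnd_{\B^{\otimes d}}(\U^{\otimes d})$ with the Clifford superalgebra $\C(d)$, which matches $\mathcal{D}^{\otimes d}\cong \C(1)^{\otimes d}\cong \C(d)$ via (\ref{eq:tensor}).

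The main obstacle will be the type $\Q$ case. Because the super-center of $\B$ contains the canonical odd involution, $\B^{\otimes d}$ is no longer simple and the isotypic decomposition of $\U^{\otimes d}$ under $\B^{\otimes d}$ is more intricate than in the type $\M$ case. The key technical step is to verify, using the matrix description (\ref{eq:typeQ}) from Example \ref{ex:findimsalg}, that the $2^d$ outer tensor products of $1$ and the odd generator of $\eEnd_\B(\U)\cong \C(1)$ across the $d$ tensor factors of $\U^{\otimes d}$ are linearly independent and exhaust all $\B^{\otimes d}$-endomorphisms of $\U^{\otimes d}$, yielding the required dimension $2^d$ and hence surjectivity of $\Phi$.
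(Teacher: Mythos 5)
Your strategy is essentially the one the paper uses: build $f_1\otimes\cdots\otimes f_d\mapsto f_1\boxtimes\cdots\boxtimes f_d$, get injectivity from the linear-algebra isomorphism (\ref{eq:isom3}), then establish surjectivity by reducing to irreducibles with Theorem~\ref{thm:Wedb} and matching dimensions of endomorphism superalgebras using Lemma~\ref{lem:Schur}. The paper's (very terse) proof does the same: reduce to $d=2$, declare injectivity clear, and then invoke Schur's lemma, Wedderburn's theorem and Kleshchev's Lemma 12.2.13 for surjectivity. What you call ``the key technical step'' --- that $\eEnd_{\B^{\otimes d}}(\U^{\otimes d})$ is exactly $2^d$-dimensional in type~$\Q$ --- is precisely what the paper outsources to Kleshchev's lemma on tensor products of irreducible supermodules.

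Two points to fix. First, the assertion that ``$\B^{\otimes d}$ is no longer simple'' in type~$\Q$ is false: the tensor product of two simple superalgebras of type~$\Q$ is again simple, of type~$\M$ (compare $\C(1)\otimes\C(1)=\C(2)\simeq\mathcal{M}_{1,1}$ once $\sqrt{-1}\in\k$), and inductively $\B^{\otimes d}$ is always simple. This does not break your argument, but the correct picture is cleaner and is actually what makes the dimension count work. Second, the sketch ``verify using the matrix description~(\ref{eq:typeQ})'' is pointed at the wrong object: (\ref{eq:typeQ}) describes elements of $\B$ (equivalently, $\B$-endomorphisms of $\U$), whereas what you must compute is the commutant of the $\B^{\otimes d}$-action on $\U^{\otimes d}$, and showing that this commutant is spanned by the $2^d$ boxed tensors of $1$ and the odd generator is exactly the nontrivial content of the step --- one either carries out this computation honestly (it can be done, say by induction in $d$ using the $d=2$ decomposition $\U\otimes\U\simeq X\oplus\Pi X$ with $X$ irreducible of type~$\M$), or cites the appropriate result, as the paper does.
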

\begin{proof}
It suffices to consider $d=2$.  The map $f\otimes g\mapsto f\boxtimes g$ is clearly injective.  To check that it is 
surjective we may use Lemma \ref{lem:Schur} together with Theorem \ref{thm:Wedb}  and
 \cite[Lemma 12.2.13]{Kleshchev}.
\end{proof}

\smallskip

\section{Strict polynomial functors of types I and II}\label{sec:strictpoly}

 We now introduce the categories $\Pol^\I_{d,\k}$ and 
 $\Pol^\II_{d,\k}$ consisting of homogeneous strict polynomial functors. 
  Such polynomial functors are realized as $\k$-linear functors 
  between an appropriate pair of $\svec_{\k}$-enriched 
  categories.

\subsection{Categories of divided powers}  
Suppose that $\B$ is a simple finite dimensional superalgebra,
and let $\V = \smod_\B$.  We then define a new category 
$\Gamma^d\V$.  The objects of $\Gamma^d\V$ are the same
as those of $\V$, i.e. finite dimensional right $\B$-supermodules.
Given $V,W \in \smod_\B$, set
$$\hom_{\Gamma^d\V}(V,W) := \Gamma^d \Hom_\B(V,W).$$  
In order to define the composition law, we make use of the 
following lemma.

\begin{lemma}
Suppose $V\in \smod_\B$.  Then $V^{\otimes d} \in \smod_{\B\wr\Si_d}$, 
where $\B\wr \Si_d$ is the wreath product defined above.  If $V,W \in \smod_{\B}$, 
then we further have a natural isomorphism
\begin{equation}\label{eq:isom5}
\Hom_{\B\wr\Si_d}(V^{\otimes d},W^{\otimes d})\ \simeq\ \Gamma^d\Hom_{\B}(V,W).
\end{equation}
\end{lemma}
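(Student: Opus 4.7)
The plan is to split the statement into two parts: first constructing the $\B\wr\Si_d$-supermodule structure on $V^{\otimes d}$, and then identifying $\Hom_{\B\wr\Si_d}(V^{\otimes d},W^{\otimes d})$ with $\Si_d$-invariants in $\Hom_\B(V,W)^{\otimes d}$ via Lemma~\ref{lem:tensor}.

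For the first part, $V^{\otimes d}$ already carries the iterated right $\B^{\otimes d}$-action from Section~\ref{subsec:tensor} and, independently, the signed right $\Si_d$-action from Section~\ref{sec:divided}. To combine these into a $\B\wr\Si_d$-action via $v\cdot(\sigma\otimes a) := (v\cdot\sigma)\cdot a$, the only nontrivial check is the associativity condition $(w\cdot a)\cdot\tau = (w\cdot\tau)\cdot(a\cdot\tau)$ for $w\in V^{\otimes d}$, $a\in\B^{\otimes d}$, $\tau\in\Si_d$, since the wreath multiplication is $(\sigma\otimes a)(\tau\otimes b) = \sigma\tau\otimes(a\cdot\tau)b$. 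This identity is true because the $\Si_d$-action on both $V^{\otimes d}$ and $\B^{\otimes d}$ permutes tensor factors with \emph{the same} Koszul signs; it suffices to verify it on homogeneous pure tensors for adjacent transpositions $(i\ i{+}1)$, where both sides collect identical sign contributions from swapping neighboring factors.

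For the second part, I would observe that a $\B^{\otimes d}$-linear map $V^{\otimes d}\to W^{\otimes d}$ is a $\B\wr\Si_d$-homomorphism precisely when it commutes with the right $\Si_d$-action, so
\[
\Hom_{\B\wr\Si_d}(V^{\otimes d},W^{\otimes d}) \;=\; \Hom_{\B^{\otimes d}}(V^{\otimes d},W^{\otimes d})^{\Si_d}.
\]
By Lemma~\ref{lem:tensor}, the right-hand $\Hom$-space is canonically isomorphic to $\Hom_\B(V,W)^{\otimes d}$ via $f_1\otimes\cdots\otimes f_d\mapsto f_1\boxtimes\cdots\boxtimes f_d$. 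The key verification is that this isomorphism intertwines the conjugation $\Si_d$-action on $\Hom_{\B^{\otimes d}}(V^{\otimes d},W^{\otimes d})$ (induced from the $\Si_d$-actions on source and target) with the signed permutation action on $\Hom_\B(V,W)^{\otimes d}$ from Section~\ref{sec:divided}. Taking invariants then yields
\[
\Hom_{\B^{\otimes d}}(V^{\otimes d},W^{\otimes d})^{\Si_d} \;\simeq\; \bigl(\Hom_\B(V,W)^{\otimes d}\bigr)^{\Si_d} \;=\; \Gamma^d\Hom_\B(V,W),
\]
as required.

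The main obstacle is the $\Si_d$-equivariance check for the isomorphism of Lemma~\ref{lem:tensor}, since the outer tensor product $\boxtimes$ carries a Koszul sign and the $\Si_d$-actions on $V^{\otimes d}$ and $W^{\otimes d}$ each introduce signs depending on the parities of the tensor factors. By reduction to adjacent transpositions, the computation reduces to showing (for homogeneous $f_1,f_2\in\Hom_\B(V,W)$ and the transposition $\tau=(1\ 2)$) that conjugation sends $f_1\boxtimes f_2$ to $(-1)^{|f_1||f_2|}f_2\boxtimes f_1$, which matches the super-symmetric action on $\Hom_\B(V,W)^{\otimes 2}$; all cross-terms of the form $|v_i||f_j|$ cancel modulo $2$, leaving exactly the Koszul sign $(-1)^{|f_1||f_2|}$. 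Once this sign bookkeeping is done for $d=2$, the general case follows because $\Si_d$ is generated by adjacent transpositions.
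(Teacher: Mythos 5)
Your proposal is correct and follows essentially the same route as the paper's proof: both establish the compatibility identity $(v\cdot a)\cdot\sigma = (v\cdot\sigma)\cdot(a\cdot\sigma)$, identify $\Hom_{\B\wr\Si_d}(V^{\otimes d},W^{\otimes d})$ with the $\Si_d$-invariants of $\Hom_{\B^{\otimes d}}(V^{\otimes d},W^{\otimes d})$, and then invoke the $\Si_d$-equivariance of the isomorphism from Lemma~\ref{lem:tensor} before passing to invariants. The only difference is that you spell out the sign bookkeeping for adjacent transpositions more explicitly than the paper, which leaves the equivariance as a check for the reader.
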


\begin{proof}
By Lemma \ref{lem:tensor}, $V^{\otimes d} \in \smod_{\B}$. 
One may check that for any $\sigma \in \Si_d$, we have
\begin{equation}\label{eq:compatible}
(v.a).\sigma = (v.\sigma).(a\cdot \sigma) \qquad (v \in V^{\otimes d},\ a \in \B),
\end{equation}
where $\Si_d$ acts on $\B^{\otimes d}$ on the right as in the definition of $\B\wr \Si_d$.  
Now given a homomorphism $\varphi \in \Hom_{\B}(V^{\otimes d},W^{\otimes d})$, 
it follows from (\ref{eq:compatible}) that  
$\varphi^{\sigma} \in \Hom_{\B}(V^{\otimes d}, W^{\otimes d})$, 
where $\varphi^\sigma:V^{\otimes d} \rightarrow W^{\otimes d}$ is the linear map defined by 
$\varphi^{\sigma}(v) = (\varphi(v.\sigma^{-1})).\sigma$ for any $v\in V^{\otimes d}$.  
One may then check that 
$$ \Hom_{\B}(V^{\otimes d}, W^{\otimes d})^{\Si_d}\ =\ \Hom_{\B\wr\Si_d}(V^{\otimes d},W^{\otimes d}).$$
It is also not difficult to check that the isomorphism (\ref{eq:isom4}) is in fact an 
isomorphism of $\Si_d$-modules.  Hence we have a canonical isomorphism 
\begin{align*}
\Gamma^d \Hom_{\B}(V,W) =&\ (\Hom_{\B}(V,W)^{\otimes d})^{\Si_d}\\
\simeq&\ \Hom_{\B^{\otimes d}}(V^{\otimes d},W^{\otimes d})^{\Si_d}\ = \Hom_{\B\wr\Si_d}(V^{\otimes d},W^{\otimes d})
\end{align*}
\end{proof}
Using the isomorphism in the previous lemma for any $V,W \in \V = \smod_{\B}$, 
composition in $\smod_{\B\wr\Si_d}$ induces a composition law 
in $\Gamma^d\V$.  As primary examples, we have the categories 
$\Gamma^d_\M = \Gamma^d\svec_\k$ and $\Gamma^d_\Q = \Gamma^d \smod_{\C(1)}$.

\subsection{Schur superalgebras}
Let $M = \k^{m|n}$.  Then we have a superalgebra isomorphism
$$\eEnd_{\Gamma^d_{\M}}(M)\ = \End_{\k \Si_d}(M^{\otimes d}) \cong \ \mathcal{S}(m|n,d),$$
where $\mathcal{S}(m|n,d)$ is the Schur superalgebra defined in \cite{Donkin}.
\medskip

Let $V = \U(1)^{\oplus n} \in \Gamma^d_{\Q}$.  Then we  have another 
isomorphism of superalgebras
$$\eEnd_{\Gamma^d_{\Q}}(V)\ = \End_{\mathcal{W}(d)}(V^{\otimes d}) \cong \ \Qc(n,d),$$
where $\Qc(n,d)$ is the  Schur superalgebra defined in \cite{BKprojective}.

\subsection{Strict polynomial functors}
Notice that $\Gamma^d_\M$ and $\Gamma^d_\Q$ are both 
$\svec_\k$-enriched categories.    
Let $\Pol^\I_d = \Fct_\k(\Gamma^d_\M, \svec_\k)$, the category 
of even $\k$-linear functors from $\Gamma^d_\M$ to $\svec_\k$.  
Similarly, let $\Pol^\II_d= \Fct_\k(\Gamma^d_\Q,\svec_\k)$.  In both cases, 
morphisms are natural transformations between functors, and 
objects of either category are called {\em (homogeneous) strict
polynomial functors}.

Given $S,T \in \Pol^\I_d$ (resp. $\Pol^\II_d$), the set of all 
natural transformations $\eta: S\rightarrow T$ is 
naturally a vector superspace.  It this way, we see that 
$\Pol^\I_d$ and $\Pol^\II_d$ are $\svec_\k$-enriched 
categories.  The even subcategories 
$(\Pol^\I_d)_{\rm ev}, (\Pol^\II_d)_\ev$ both inherit the 
structure of an abelian category, since kernels, 
cokernels, products and coproducts can be computed 
in the target category $(\svec_\k)_{\rm ev}$.

\begin{examples}  We have the following examples of 
strict polynomial functors belonging to $\Pol^\I_d$ 
for some $d\ge 1$.
\begin{itemize}
\item[(i)]  The identity functor, 
${\rm Id}: \svec_\k \rightarrow \svec_\k$, belongs to 
$\Pol^\I_1$. Another object of $\Pol^\I_1$ is the 
parity change functor 
$\Pi: \svec_\k \rightarrow \svec_\k$, introduced in 
the previous section.
\smallskip
\item[(ii)]  The functor $\otimes^d \in \Pol^\I_d$ sends 
an object $M\in \Gamma^d_\M$ to $M^{\otimes d}$ 
and a morphism $f \in \hom_{\Gamma^d_\M}(M,N)$ 
to the same underlying map regarded as an element 
of $\Hom_\k(M^{\otimes d}, N^{\otimes d})$.
\smallskip
\item[(iii)]  Given $M\in \svec_\k$, let 
$\Gamma^{d,M} = \hom_{\Gamma^d_\M}(M, -)$, a 
representable functor in $\Pol^\I_d$.  In particular, 
if $M = \k^{m|n}$ we write 
$\Gamma^{d,m|n} = \Gamma^{d,M}$.
\end{itemize}
\end{examples}
Notice that for any $M\in \svec_\k$, we have a 
canonical isomorphism 
\begin{equation}\label{eq:div type I}
\Gamma^d M \simeq \Gamma^{d,1|0}(M),
\end{equation}
 since $\hom_{\Gamma^d_\M}(\k, M) = \Gamma^d\Hom_\k(\k,M) \simeq \Gamma^dM$.
\medskip

Let us identify $\smod_{\C(1)}$ as a subcategory 
of $\svec_\k$.  Since we may view $\k \Si_d$ as 
a subsuperalgebra of $\W(d)$, there is a 
restriction functor from $\smod_{\W(d)}$ to 
$\smod_{\k \Si_d}$.   This in turn yields an even 
$\k$-linear functor, 
$\mathrm{Res}: \Gamma^d_\Q \rightarrow \Gamma^d_\M$, 
which acts as the identity on objects and by 
restriction on morphisms.  Hence, composition 
yields a functor 
$$- \circ \mathrm{Res}: \Pol^\I_d \rightarrow \Pol^\II_d.$$

\begin{examples} The following are examples of 
objects in  $\Pol^\II_d$, for some $d\ge 1$.
\begin{enumerate} 
\item[(i)]  We use the same notation, 
${\rm Id} = {\rm Id} \circ \mathrm{Res}: \smod_{\C(1)} \rightarrow \svec_\k$, 
to denote the restriction of the identity functor.  Clearly 
Id$\in \Pol^\II_1$.  Also, note that we have an even 
isomorphism $$ \Pi\circ \mathrm{Res}\ \simeq {\rm Id}$$ 
in $\Pol^\II_1$.
\smallskip
\item[(ii)] The functor 
$\otimes^d = \otimes^d \circ \mathrm{Res} \in \Pol^\II_d$ 
sends an object $V\in \Gamma^d_\Q$ to $V^{\otimes d}$ 
and a morphism $\varphi \in \hom_{\Gamma^d_\Q}(V,W)$ 
to the same underlying map regarded as an element of 
$\Hom_\k(V^{\otimes d}, W^{\otimes d})$.
\smallskip
\item[(iii)]  If $V \in \Gamma^d_\Q$, let 
$\Gamma^{d,V} = \hom_{\Gamma^d_\Q}(V,-)$, which 
belongs to $\Pol^\I_d$.  In case $V= \U(1)^{\oplus n}$, 
we write $\Gamma^{d,n} = \Gamma^{d,V}$.
\end{enumerate}
\end{examples}

Given $V \in \smod_{\C(1)}$, notice that we have a canonical 
isomorphism 
\begin{equation}\label{eq:div type II}
\Gamma^{d,1}(V) \simeq \Gamma^d V,
\end{equation}
 since 
 $$\Gamma^d \Hom_{\C(1)}(\U(1),V)\ =\ \Gamma^d\Hom_{\C(1)}(\C(1),V)\ \simeq\ \Gamma^d V.$$

\subsection{Duality}
Suppose $\tau$ is an antiautomorphism of a superalgebra $\B$, and 
let $V \in \smod_\B$. 
Then we can make the dual space $V^\vee$ 
into a right $\B$-supermodule by defining
$$ \la f.a,v \ra = \la f, v.\tau(a)\ra \qquad (a \in \B, f \in V^\vee, v\in V).$$
We denote the resulting supermodule by $V^{\tau, \vee}$.  If 
$V,W \in \smod_{\B}$ and $\varphi \in \Hom_{\C(1)}(V,W)$, then let 
$\varphi^\vee: W^{\tau, \vee} \rightarrow V^{\tau, \vee}$ be defined by
$$\la \varphi^\vee(f), v \ra \ =\ (-1)^{|\varphi| |f|}\la f, \varphi(v) \ra $$
for all $f \in V^\vee$, $v\in V$.
Then $\varphi^\vee \in \Hom_\B(W^{\tau,\vee}, V^{\tau,\vee})$, and we 
furthermore have a natural isomorphism 
\begin{equation}\label{eq:hom dual}
\Hom_\B(V,W)\ \simeq\ \Hom_\B(W^{\tau, \vee}, V^{\tau,\vee}).
\end{equation}

Given any $\svec_\k$-enriched category $\V$, let us write 
$\V^{\op,-} = (\V^-)^\op$ to denote the opposite category of $\V^-$.
Now let $\V= \smod_\B$.
Then (\ref{eq:hom dual}) gives an equivalence of categories
\begin{equation}\label{eq:duality2}
(\hspace{.3cm} )^{\tau, \vee}: (\smod_{\B})^{\op,-} \equi \smod_{\B}.
\end{equation}

An antiautomorphism  $\tau$ of $\B$ induces an antiautomorphism $\tau_2$
of $\B\otimes \B$ by setting $\tau_2(a\otimes b)= (-1)^{|a| |b|} \tau(a)\otimes \tau(b)$.
In general, this gives an antiautomorphism $\tau_d$ of $\B^{\otimes d}$ for all
$d\ge1$.  If $V,W \in \smod_\B$, we have a canonical isomorphism of 
$\B\otimes \B$-supermodules
\begin{equation*}
V^{\tau,\vee}\otimes W^{\tau,\vee} \simeq (V\otimes W)^{\tau_2,\vee}
\end{equation*}
given by 
\begin{equation}\label{eq:tens power}
\la f \otimes g, v\otimes w \ra = (-1)^{|g| |v|} \la f,v \ra \la g, w \ra
\end{equation}
for all $f\in V^{\tau,\vee}$, $g \in W^{\tau,\vee}$, $v\in V$, $w\in W$.

Suppose now that $\B$ is a simple finite dimensional superalgebra.
Let us fix generators $s_i= (i\ i+1)\in \Si_d$ for $i=1,\dots, d-1$.  Then $\tau_d$
extends uniquely to an antiautomorphism of $\B\wr \Si_d$, also denoted $\tau_d$,
such that $\tau_d(s_i) = s_i$ for $i=1, \dots, d-1$.  So that the equivalence 
(\ref{eq:duality2}) with respect to $\B\wr \Si_d$ and $\tau_d$ induces a correspoding
equivalence
$$ (\ )^{\tau,\vee}: (\Gamma^d\V)^{\op,-} \equi \Gamma^d\V$$
for $\V= \smod_{\B}$.

\begin{example}
If $\B= \k$ or $\C(1)$, then $\tau(a)=a$ $(^\forall a\in \B)$ defines an antiautomorphism
of $\B$.  Hence we have equivalences
$$ (\ )^\vee: (\Gamma^d_\M)^{\op,-} \equi \Gamma^d_\M \qquad \mbox{and}
\qquad (\ )^\vee: (\Gamma^d_\Q)^{\op,-} \equi \Gamma^d_\Q.$$
\end{example}

Again let $\V= \smod_\B$ for a finite dimensional simple superalgebra $\B$.  Suppose
$T\in \Fct_\k(\Gamma^d\V, \svec_\k)$.  Then define the {\em Kuhn dual}
$$T^\# \in \Fct_\k\left((\Gamma^d\V)^{\op,-}, (\svec_\k)^{\op,-}\right) =\ \Fct_\k\left(\Gamma^d\V, \svec_\k \right)$$
by setting
$T^\#(V)= T(V^{\tau,\vee})^\vee$ for all $V\in \V$.  If $\B= \k$ or $\C(1)$, 
this gives an equivalence
\begin{equation}\label{eq:duality4}
 (\ )^\#: (\Pol_d^\dagger)^{\op,-} \equi \Pol_d^\dagger,
 \end{equation}
for $\dagger =$ I or II, respectively.

\smallskip

As an example, for $V\in \Gamma^d_\M$ (resp. $\Gamma^d_\Q$), we define 
$S^{d,V}:= (\Gamma^{d,V})^\#$.  In particular, let us write 
$S^{d, m|n} = S^{d, \k^{m|n}}$ and $S^{d,n}= S^{d, \U(1)^{\oplus n}}$.  It then 
follows from equation (\ref{eq:duality1}) that we have canonical isomorphisms
$$ S^{d, 1|0}(M) \simeq S^d M, \quad S^{d, 1}(V) \simeq S^d V$$
for all $M \in \svec_\k$, $V \in \smod_{\C(1)}$ respectively.

\subsection{Yoneda's lemma}

We have the following analogue of Yoneda's lemma in our setting.
\begin{lemma}
Suppose that $T \in \Pol_d^\I$ and $T' \in \Pol_d^\II$.  Then we have natural 
isomorphisms:
\begin{align*}
{\rm (i) }\ &\hom_{\Pol^\I_d}(\Gamma^{d,M}, T) \simeq T(M) \quad (M\in \svec_\k),\ {\rm and}\\
{\rm (ii) }\ &\hom_{\Pol^\II_d}(\Gamma^{d,V}, T') \simeq T'(V) \quad (V\in \smod_{\C(1)}).
\end{align*}
\end{lemma}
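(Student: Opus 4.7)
The plan is to establish both isomorphisms by the usual Yoneda construction, adapted to the $\svec_\k$-enriched setting. For part (i), I would first define a forward map
$$\Phi: \hom_{\Pol^\I_d}(\Gamma^{d,M}, T) \to T(M)$$
by sending a natural transformation $\eta$ to $\eta_M(\Id_M)$, where $\Id_M$ denotes the identity in the hom-space $\hom_{\Gamma^d_\M}(M,M) = \Gamma^{d,M}(M)$ (realized as the image of $\Id_M^{\otimes d}$ under the invariance inclusion into $\Gamma^d\End_\k(M)$). Since $\Id_M$ is an even element, the map $\Phi$ preserves $\Z_2$-grading.

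Next, I would construct the inverse
$$\Psi: T(M) \to \hom_{\Pol^\I_d}(\Gamma^{d,M}, T)$$
by sending $t \in T(M)$ to the natural transformation $\eta^t$ whose component at $N \in \svec_\k$ is $\eta^t_N: f \mapsto T(f)(t)$ for $f \in \Gamma^{d,M}(N) = \hom_{\Gamma^d_\M}(M,N)$. Well-definedness and naturality of $\eta^t$ follow from the functoriality of $T$, and the parity of $\eta^t$ matches that of $t$ because $T$ sends morphisms of a given degree to morphisms of the same degree.

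The two composites are handled by the same standard calculations as in the classical Yoneda lemma. Indeed, $\Phi(\Psi(t)) = T(\Id_M)(t) = t$ since $T$ is a functor; conversely, for any $\eta$ and any $f \in \hom_{\Gamma^d_\M}(M,N)$, the naturality square of $\eta$ applied to $f$ (viewed as a morphism $M \to N$ in $\Gamma^d_\M$) gives
$$\Psi(\Phi(\eta))_N(f) \,=\, T(f)\bigl(\eta_M(\Id_M)\bigr) \,=\, \eta_N\bigl(\Gamma^{d,M}(f)(\Id_M)\bigr) \,=\, \eta_N(f),$$
where the middle step uses that $\Gamma^{d,M}(f)$ is post-composition with $f$.

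Part (ii) is proved by exactly the same argument, replacing $\Gamma^d_\M$ by $\Gamma^d_\Q$, $M \in \svec_\k$ by $V \in \smod_{\C(1)}$, and $\Id_M$ by $\Id_V \in \hom_{\Gamma^d_\Q}(V,V)$. The only mild subtlety in either case is ensuring the $\svec_\k$-enriched composition law, with its Koszul signs, is handled consistently; however, because $\Id_M$ and $\Id_V$ are even, no signs appear in any of the calculations above, and so the classical Yoneda proof transfers without modification.
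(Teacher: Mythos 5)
Your approach is the right one — the enriched Yoneda argument is exactly what the paper intends, and indeed the paper states this lemma without proof, so you are filling in an omitted argument rather than redoing one. Your identification of $\Id_M$ as the image of $\Id_M^{\otimes d}$ inside $\Gamma^d\End_\k(M)$ is also correct. However, the final claim that ``because $\Id_M$ and $\Id_V$ are even, no signs appear in any of the calculations above'' is not accurate, and it hides a real gap in the construction of $\Psi$.

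In the paper's conventions, a natural transformation of degree $i$ obeys the Koszul-signed naturality square
$\eta_N\circ S(g)=(-1)^{i|g|}\,T(g)\circ\eta_{N'}$ for homogeneous $g$. This is forced by the identification in Appendix~A: evaluation at $P$ carries $\hom_{\V\text{-}\smod}(h^P,F)$ into ${}_\E\smod$, where morphisms of left supermodules are defined in Section~2 with the sign $\varphi(av)=(-1)^{|\varphi||a|}a\varphi(v)$; the sign in the naturality condition is precisely what makes $\eta_P$ a left $\E$-supermodule homomorphism in this sense. With that convention, your map $\Psi(t)_N(f)=T(f)(t)$ is \emph{not} a natural transformation when $|t|=1$: for odd $g$ and odd $t$ one finds $\Psi(t)_N\circ\Gamma^{d,M}(g)=T(g)\circ\Psi(t)_{N'}$, missing the required factor $(-1)^{|t||g|}$. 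The fix is to define
$\Psi(t)_N(f):=(-1)^{|t||f|}\,T(f)(t)$.
With this corrected formula, $\Psi(t)$ satisfies the signed naturality, $\Phi(\Psi(t))=T(\Id_M)(t)=t$ because $|\Id_M|=0$, and in the other direction
$\Psi(\Phi(\eta))_N(f)=(-1)^{|\eta||f|}T(f)(\eta_M(\Id_M))=\eta_N\bigl(\Gamma^{d,M}(f)(\Id_M)\bigr)=\eta_N(f)$,
where the factor $(-1)^{|\eta||f|}$ is now exactly cancelled by the Koszul sign in the naturality square of $\eta$. So the signs you dismissed do arise — they come from the parities of $t$, $\eta$, and $f$, not from $\Id_M$ — and they cancel, but only after you build the compensating sign into $\Psi$. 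The same correction is needed in part (ii).
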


It follows that $\Gamma^{d,M}, \Gamma^{d,V}$ are projective objects of 
$(\Pol^\I_d)_\ev, (\Pol^\II_d)_\ev$ respectively.  On the other hand, the dual objects 
$S^{d,M}, S^{d,V}$ are injective by Kuhn duality (\ref{eq:duality4}).

\subsection{Tensor products}

Given nonnegative integers $d$ and $e$, we have an embedding 
$\Si_d \times \Si_e \hookrightarrow \Si_{d+e}$.  This induces an embedding 
\begin{equation}\label{eq:embed}
\Gamma^{d+e}M \hookrightarrow \Gamma^{d}M\otimes \Gamma^{e}M,
\end{equation}
 for any $M\in \svec_\k$, given by the composition of the following maps
\begin{align*}
 \Gamma^{d+e}M = (M^{\otimes(d+e)})^{\Si_{d+e}}\ \subseteq\ &(M^{\otimes d+e)})^{\Si_d\times \Si_e}\\
  \simeq\ &\ (M^{\otimes d})^{\Si_d}\otimes (M^{\otimes e})^{\Si_e} = \Gamma^dM \otimes \Gamma^eM.
\end{align*}
Now we may consider the categories $\Gamma^d_\M \otimes \Gamma^e_\M$, 
$\Gamma^d_\Q\otimes \Gamma^e_\Q$ whose objects are the same as $\svec_\k$, $\smod_{\C(1)}$ 
and whose morphisms are of the form
$$\hom_{\Gamma^d_\M}(M,N)\otimes \hom_{\Gamma^e_\M}(M,N), 
\quad \hom_{\Gamma^d_\Q}(V,W)\otimes \hom_{\Gamma^e_\M}(V,W)$$  
respectively for $M,N \in \svec_\k$ and $V,W\in \smod_{\C(1)}$.  Then, one may show that 
(\ref{eq:embed}) yields embeddings of categories
\begin{equation}\label{eq:embed2}
 \Gamma^{d+e}_\M \hookrightarrow \Gamma^d_\M \otimes \Gamma^e_\M, \qquad  
 \Gamma^{d+e}_\Q \hookrightarrow \Gamma^d_\Q \otimes \Gamma^e_\Q.
\end{equation}

Now suppose $S\in \Pol^{(\dagger)}_d$,  $T\in \Pol^{(\dagger)}_e$ ($\dagger=\mathrm{I,II}$). Let 
$S\boxtimes T \in \Pol^{(\dagger)}_{d+e}$ denote the functor defined by setting: for $V,W \in \svec_\k$ 
(resp. $\smod_{\C(1)}$) and $\varphi:V\rightarrow W$ a $\k$-linear (resp. $\C(1)$-linear) map,
$$(S\boxtimes T)(V) := S(V)\otimes T(V) \mbox{ and } (S\boxtimes T)(\varphi) := S(\varphi)\boxtimes T(\varphi),$$
respectively.
Then (\ref{eq:embed2}) induces bifunctors:
$$-\otimes-: \Pol^{(\dagger)}_d \times \Pol^{(\dagger)}_e \rightarrow \Pol^{(\dagger)}_{d+e} 
\qquad (\dagger = \mathrm{I,II}), $$
which respectively send $S\times T \mapsto S\boxtimes T$.
\medskip

\section{Strict polynomial functors, Schur superalgebras and Sergeeev duality}\label{sec:schursalg}
We show that the categories of strict polynomial functors of types I and II 
defined above  are equivalent to categories of supermodules for the 
Schur superalgebras $\mathcal{S}(m|n,d)$ and $\Qc(n,d)$, 
respectively.  We then describe a functorial analogue of Sergeev duality for 
type II strict polynomial functors.

\subsection{Equivalences of categories}
Let $M \in \svec_{\k}$.  If $v\in M$, we write 
$v^{\otimes d} = v\otimes \cdots \otimes v$ 
($d$ factors).  Suppose that 
$I = (d_1, \dots, d_s)$ is a tuple of positive 
integers, and let $\mathfrak{S}_I$ denote 
the subgroup 
$\mathfrak{S}_{d_1}\times \cdots \times \mathfrak{S}_{d_s} \subseteq \mathfrak{S}_{|I|}$, 
where $|I| = \sum d_i$.  Given distinct 
nonzero elements $v_1, \dots, v_s \in M_0$, 
we define the new element
\[
(v_1, \dots, v_s; I)_0 := 
\sum_{\sigma \in \mathfrak{S}_{|I|}/\mathfrak{S}_{I}} (v_1^{\otimes d_1} \otimes \cdots \otimes v_s^{\otimes d_s}).\sigma,
\]
which belongs to 
$(M_0^{\otimes |I|})^{\mathfrak{S}_{|I|}}$.  Similiarly, 
if $v_1', \dots, v_t' \in M_1$, we define the (possibly 
zero) element
\[
(v_1', \dots, v_t')_1 := \sum_{\sigma \in \mathfrak{S}_{t}} (v_1' \otimes \cdots \otimes v_t').\sigma,
\]
which belongs to $(M_1^{\otimes t})^{\mathfrak{S}_t}$.

\begin{lemma}\label{lem:divided}
Let $M\in \svec_{\k}$, and suppose 
$\{e_1^{(0)}, \dots, e_m^{(0)}\}$, 
$\{e_1^{(1)}, \dots, e_n^\mathrm{(1)}\}$ are ordered 
bases of $M_0, M_1$ respectively.  Then 
$\Gamma^d M$ has a basis given by the set of all 
elements of the form
$$ (e^{(0)}_{i_1}, \dots, e^{(0)}_{i_s}; I)_0 \otimes (e^{(1)}_{j_1}, \dots, e^{(1)}_{j_t})_1,
$$
such that: $|I| +t = d$, $1\le i_1< \cdots< i_s\le m$ and 
$1\le j_1< \cdots< j_t\le n$. 
\end{lemma}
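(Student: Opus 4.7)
The plan is to decompose $M^{\otimes d}$ along parities and analyze each invariant factor separately. First we would observe that the super action of $\Si_d$ permutes the summands $M_{\epsilon_1} \otimes \cdots \otimes M_{\epsilon_d}$ of $M^{\otimes d}$ indexed by parity sequences $\epsilon \in \Z_2^d$, and these summands fall into $\Si_d$-orbits parameterized by pairs $(a, b)$ with $a + b = d$ counting zeros and ones respectively. Writing $M^{(a,b)}$ for the sum of the summands with profile $(a, b)$, this gives a $\Si_d$-stable decomposition $M^{\otimes d} = \bigoplus_{a+b=d} M^{(a,b)}$, and $M^{(a,b)}$ is the $\Si_d$-module induced from the canonical block $M_0^{\otimes a} \otimes M_1^{\otimes b}$ carrying the residual super action of the stabilizer $\Si_a \times \Si_b$. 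The shuffle map
\begin{equation*}
\Phi\colon (M_0^{\otimes a})^{\Si_a} \otimes (M_1^{\otimes b})^{\Si_b} \longrightarrow (M^{(a,b)})^{\Si_d}, \qquad x \mapsto \sum_{\sigma \in \Si_d/(\Si_a \times \Si_b)} x \cdot \sigma,
\end{equation*}
is then a well-defined isomorphism: it is injective and surjective without any averaging, since the summands $x \cdot \sigma$ for distinct coset representatives land in distinct parity-profile blocks. This yields $\Gamma^d M \simeq \bigoplus_{a+b=d} (M_0^{\otimes a})^{\Si_a} \otimes (M_1^{\otimes b})^{\Si_b}$.

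Next one identifies bases of the two invariant tensor factors. On the even side, the super action of $\Si_a$ on $M_0^{\otimes a}$ coincides with the classical symmetric-group action (no signs arise between even elements), so $(M_0^{\otimes a})^{\Si_a}$ is identified with $D^a \overline{M_0}$, and the orbit sums $(e_{i_1}^{(0)}, \ldots, e_{i_s}^{(0)}; I)_0$ with $i_1 < \cdots < i_s$ and $|I| = a$ realize the standard divided-power basis. On the odd side, the super action of $\Si_b$ on $M_1^{\otimes b}$ equals the sign-twisted classical action, so super-invariants are antisymmetric tensors; unpacking the definition yields $(e_{j_1}^{(1)}, \ldots, e_{j_b}^{(1)})_1 = \sum_{\sigma \in \Si_b} \mathrm{sgn}(\sigma)\, e_{j_{\sigma(1)}}^{(1)} \otimes \cdots \otimes e_{j_{\sigma(b)}}^{(1)}$, the usual antisymmetrization, and these elements furnish a basis of $(M_1^{\otimes b})^{\Si_b} \simeq \Lambda^b \overline{M_1}$ as $j_1 < \cdots < j_b$ range over strictly increasing sequences. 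The basis property holds in any characteristic $\neq 2$ because distinct wedges are supported on disjoint tensor monomials with coefficients $\pm 1$.

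Combining these two bases under $\Phi$ and running over all pairs $(a, b) = (|I|, t)$ with $|I| + t = d$ then produces exactly the elements asserted in the lemma (the outer tensor symbol being interpreted via the shuffle embedding). The main obstacle, compared with the characteristic-zero picture, is establishing the shuffle-sum isomorphism in the first step without the usual averaging argument; the explicit orbit-by-orbit description of $M^{(a,b)}$ circumvents this, and the remainder is a routine unpacking of standard bases for divided-power and exterior algebras.
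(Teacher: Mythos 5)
Your argument is correct and follows essentially the same route as the paper: both reduce the lemma to the decomposition $\Gamma^d M \simeq \bigoplus_{k+l=d} D^k\overline{M_0}\otimes\Lambda^l\overline{M_1}$ and then quote the standard divided-power and exterior-power bases. The only difference is that the paper simply invokes the previously stated isomorphism (\ref{eq:div1}) and a reference to Bourbaki, whereas you prove the degree-$d$ piece of that decomposition directly via the parity-profile orbit/shuffle map (noting that no averaging, hence no characteristic restriction, is needed) and make explicit that the basis element in the statement is to be read through that shuffle embedding.
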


\begin{proof}
It follows from (\ref{eq:div1})  
that we have isomorphisms of superspaces
\begin{equation}\label{eq:Gamma}
\Gamma^d M\ \simeq\ \bigoplus_{k + l = d} D^k(\overline{M_0}) \otimes \Lambda^l(\overline{M_1})
\ \simeq \bigoplus_{k+l =d} \Gamma^k M_0 \otimes \Gamma^l M_1,
\end{equation}
for each $d \ge 0$.  

One may check by comparison with Proposition 4 of
\cite[Ch.IV, \S5]{Bourbaki} that the set 
\[ \{ (e_{i_1}^{(0)}, \dots, e_{i_s}^{(0)}; I)_0 ;\ |I| = k \mbox{ and } 1\le i_1< \cdots< i_s \le m\}
\]  
is a basis of $\Gamma^k M_0$.  It is also 
not difficult to verify that
\[ \{ (e_{j_1}^{(1)}, \dots, e_{j_l}^{(1)})_1;\ 1 \le j_1< \cdots< j_l \le n\}
\]  
is a basis of $\Gamma^l M_1$.  The 
lemma then follows from (\ref{eq:Gamma}).
\end{proof}

We are now ready to prove the main theorem.

\begin{theorem}\label{thm:main1}
Assume $m,n\ge d$.   Then evaluation on $\k^{m|n}$, 
$\U(1)^{\oplus n}$ yields equivalences of categories: 
$$\Pol^\I_d\ \equi\ {}_{\mathcal{S}(m|n,d)}\smod, \quad  \Pol^\II_d\ \equi\ {}_{\Qc(n,d)}\smod$$
respectively.
\end{theorem}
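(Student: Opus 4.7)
The plan is to adapt the Friedlander--Suslin strategy \cite[Theorem~3.2]{FS}, in its divided-powers formulation (cf.\ \cite{Krause,P}), to the super setting. I treat type I in detail; type II proceeds along the same lines with $\U(1)^{\oplus n}$, $\W(d)$, and $\Qc(n,d)$ playing the roles of $M=\k^{m|n}$, $\k\Si_d$, and $\mathcal{S}(m|n,d)$.

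Set $M=\k^{m|n}$. Combining Yoneda's lemma with the identification $\End_{\Gamma^d_\M}(M)\cong \mathcal{S}(m|n,d)$ proved earlier, evaluation at $M$ yields an $\svec_\k$-enriched functor
\[
 \mathrm{ev}_M:\Pol^\I_d\longrightarrow {}_{\mathcal{S}(m|n,d)}\smod,\qquad T\longmapsto T(M)\simeq \hom_{\Pol^\I_d}(\Gamma^{d,M},T),
\]
the $\mathcal{S}(m|n,d)$-action on $T(M)$ coming from functoriality via $\varphi\cdot t=T(\varphi)(t)$. My candidate quasi-inverse is
\[
 \Phi:{}_{\mathcal{S}(m|n,d)}\smod\longrightarrow \Pol^\I_d,\qquad N\longmapsto \Bigl(V\mapsto \Gamma^d\Hom(M,V)\otimes_{\mathcal{S}(m|n,d)} N\Bigr),
\]
where $\mathcal{S}(m|n,d)$ acts on $\Gamma^d\Hom(M,V)$ on the right by precomposition in $\Gamma^d_\M$. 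One composite is immediate: $\mathrm{ev}_M\circ\Phi(N)=\Gamma^d\Hom(M,M)\otimes_{\mathcal{S}(m|n,d)} N\simeq N$.

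The substantive step is to verify $\Phi\circ\mathrm{ev}_M\simeq \mathrm{Id}$: for every $T\in \Pol^\I_d$ and every $V\in\svec_\k$, the natural evaluation map
\[
 \Gamma^d\Hom(M,V)\otimes_{\mathcal{S}(m|n,d)} T(M)\longrightarrow T(V),\qquad f\otimes t\longmapsto T(f)(t),
\]
must be an isomorphism. Both sides are $\k$-linear and right exact in $T$, and by Yoneda the representables $\{\Gamma^{d,N}\}_{N\in \svec_\k}$ form a projective generating set of $(\Pol^\I_d)_\ev$. Hence it suffices to verify the assertion for $T=\Gamma^{d,N}$, in which case it reduces, via Yoneda, to the composition isomorphism
\[
 \Gamma^d\Hom(M,V)\otimes_{\mathcal{S}(m|n,d)}\Gamma^d\Hom(N,M)\ \simeq\ \Gamma^d\Hom(N,V) \qquad (N,V\in\svec_\k),
\]
induced by composition in $\Gamma^d_\M$.

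The main obstacle will be establishing this composition isomorphism, and it is precisely here that the hypothesis $m,n\ge d$ enters. The underlying input is a super Schur--Weyl property: when $m,n\ge d$, the $\k\Si_d$-supermodule $M^{\otimes d}$ contains the regular representation $\k\Si_d$ as a direct summand via the embedding $\sigma\mapsto e_{\sigma(1)}\otimes\cdots\otimes e_{\sigma(d)}$ for $d$ distinct even basis vectors $e_1,\dots,e_d$ of $M_0$, with an $\Si_d$-equivariant splitting. As a consequence, every $\Si_d$-equivariant map $N^{\otimes d}\to V^{\otimes d}$ factors through $M^{\otimes d}$; translating this via Lemma~\ref{lem:tensor} and the explicit divided-powers bases of Lemma~\ref{lem:divided} yields the desired composition isomorphism. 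Injectivity is immediate from Lemma~\ref{lem:tensor} (faithfulness of the $d$-fold tensor power), and surjectivity is precisely the point for which $m,n\ge d$ is needed. The type II case proceeds by the same template, replacing $M^{\otimes d}$ by $(\U(1)^{\oplus n})^{\otimes d}$ and $\k\Si_d$ by $\W(d)$, the required summand decomposition (for $n\ge d$) being part of the Sergeev duality developed in \cite{BKprojective,Sergeev}.
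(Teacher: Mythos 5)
Your overall strategy --- construct the evaluation functor, exhibit a candidate quasi-inverse $\Phi$, and reduce the verification of $\Phi\circ\mathrm{ev}_M\simeq\mathrm{Id}$ to the representable case via right exactness and Yoneda --- is sound and is broadly the Friedlander--Suslin route. Your surjectivity argument for the composition map is exactly the paper's key step: Lemma~\ref{lem:divided} gives the divided-powers basis of $\Gamma^d\hom_\B(N,V)$, and $m,n\ge d$ lets one lift each basis element through $M$ (resp.\ $\U(1)^{\oplus n}$) by choosing distinct intermediate indices.

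The gap is in injectivity. You assert ``injectivity is immediate from Lemma~\ref{lem:tensor} (faithfulness of the $d$-fold tensor power),'' but Lemma~\ref{lem:tensor} says no such thing: it gives a canonical even isomorphism
\[
\Hom_\B(V,W)^{\otimes d}\ \simeq\ \Hom_{\B^{\otimes d}}(V^{\otimes d},W^{\otimes d}),
\]
which is a statement about $d$-fold tensor powers of Hom-spaces, not a faithfulness statement, and it does not control the kernel of
\[
\Gamma^d\Hom(M,V)\otimes_{\mathcal{S}(m|n,d)}\Gamma^d\Hom(N,M)\ \longrightarrow\ \Gamma^d\Hom(N,V).
\]
Injectivity here is in fact the substantive Morita-type point: it is not automatic, since $M^{\otimes d}$ need not be projective over $\k\Si_d$ in positive characteristic (only the summand $\k\Si_d\subseteq M^{\otimes d}$ is), so the usual projective-generator Morita argument does not apply verbatim. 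The paper deliberately avoids proving this isomorphism directly: it invokes Proposition~\ref{prop:appendix}, whose hypothesis asks only for \emph{surjectivity} of the raw composition pairing $\hom_\V(P,Y)\otimes\hom_\V(X,P)\twoheadrightarrow\hom_\V(X,Y)$, and then extracts the equivalence by showing $\{h^P,\Pi h^P\}$ is a projective generating set and checking full faithfulness and essential surjectivity via presentations by these representables (using left exactness of Hom, not injectivity of the composition pairing). You should either adopt that route, or replace your injectivity claim with an actual argument (e.g.\ deduce it a posteriori from full faithfulness of $\mathrm{ev}_M$ once that is established). A minor further point: your projective generating family should include the parity shifts $\Pi\,\Gamma^{d,N}$ alongside $\Gamma^{d,N}$, as in Proposition~\ref{prop:appendix}(ii).
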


 \begin{proof}
We prove only the second equivalence, since the proof of the first equivalence is similar.
Recall that  
$$\Qc(n,d)=\ \eEnd_{\Gamma^d_\Q}( \U(1)^{\oplus n}).$$
According to Proposition \ref{prop:appendix}, it suffices to show that the map 
 induced by composition,
 \begin{equation}\label{eq:comp}
 \hom_{\Gamma^d_\Q}(\U(1)^{\oplus n}, W) \otimes \hom_{\Gamma^d_\Q}(V, \U(1)^{\oplus n}) 
 \rightarrow \hom_{\Gamma^d_\Q}(V,W)
 \end{equation}
is surjective for all $V,W \in \Gamma^d_\Q$.  From Example \ref{ex:cliff2} in Section 2, 
it follows that for any $r\in \Z_2$ there exist bases $(x^{(r)}(j,i))$, $(y^{(r)}(k,j))$ and 
$(z^{(r)}(k,i))$ of $\Hom_{\C(1)}(V, \U(1)^{\oplus n})_r$, 
$\Hom_{\C(1)}(\U(1)^{\oplus n}, W)_r$ 
and $\Hom_{\C(1)}(V, W)_r$ respectively, such that:
\[y^{(r)}(k,j_1) \circ x^{(r')}(j_2,i)\  =\ \delta_{j_1,j_2}\ z^{(r+r')}(k,i),\]
for $r,r' \in \Z_2$, where $\delta_{j_1,j_2}$ is the Kronecker delta.

To prove surjectivity, it suffices to show for $1\le s,t \le d$ that each element of the form:
\begin{equation}\label{eq:element}
 (z^{(0)}(k_1,i_1), \dots, z^{(0)}(k_s,i_s); I)_0 \otimes (z^{(1)}(k_1',i_1'), \dots, z^{(1)}(k_t',i_t'))_1
\end{equation}
in $\Gamma^d \Hom_{\C(1)}(V,W) \simeq \hom_{\Gamma^d_\Q}(V,W)$ lies
 in the image of (\ref{eq:comp}), since $\Gamma^d \Hom_{\C(1)}(V,W)$ is 
 spanned by such elements according Lemma \ref{lem:divided}.
Now since $n\ge d$, we have $n\ge s$ and $n\ge t$.  Thus we may choose 
{\em distinct} indices $j_1, \dots, j_s$ (resp. $j_1', \dots, j_t'$) to form the element
\begin{align*}
 &(y^{(0)}(k_1,j_1), \dots, y^{(0)}(k_s,j_s); I)_0 \otimes (y^{(1)}(k_1',j_1'), \dots, y^{(1)}(k_t',j_t'))_1\\
 &\otimes\ (x^{(0)}(j_1,i_1), \dots, x^{(0)}(j_s,i_s); I)_0 \otimes (x^{(1)}(j_1',i_1'), \dots, x^{(1)}(j_t',i_t'))_1,
\end{align*}
which is sent to the element (\ref{eq:element}) under the map induced by composition 
in $\Gamma^d_\Q$.
\end{proof}

From the previous thereom and the classifications given in \cite{Donkin}, 
\cite{BKprojective} we obtain the following corollary.  By a {\em partition}
we mean an infinite nonincreasing sequence 
$\lambda=( \lambda_1, \lambda_2, \dots)$ of nonnegative integers such 
that the sum $|\lambda|= \sum \lambda_i$ is finite.  Let $\mathbb{P}$ denote the 
set of all partitions.

\begin{corollary}
The set of distinct isomorphism classes of simple objects of $\Pol^\I_d$ 
is in bijective correspondence with the set of pairs
$$\{ (\lambda, \mu)\ ;\ \lambda,\mu\in \mathbb{P} \mbox{ and } |\lambda|+ p|\mu|= d\}.$$

Now suppose in addition that the field $\k$ is algebraically closed.  Then 
the set of classes of simple objects of $\Pol^\II_d$ is in bijective correspondence 
with the set of partitions 
$$\{ \lambda\in \mathbb{P}\ ;\ |\lambda|=d, \mbox{ and } \lambda_i \neq \lambda_{i+1} \mbox{ if } p \nmid \lambda_i\}.$$ 
\end{corollary}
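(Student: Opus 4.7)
The proof is a direct application of Theorem \ref{thm:main1} combined with the classifications of simple supermodules over the Schur superalgebras already recorded in \cite{Donkin} and \cite{BKprojective}; no further homological work is needed.

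My first step is to observe that any equivalence of categories preserves isomorphism classes of simple objects, and that the notion of simple object in $\Pol^{\dagger}_d$ is inherited from the abelian category $(\Pol^{\dagger}_d)_\ev$ of Section \ref{sec:strictpoly}. Fixing any $m,n \ge d$, Theorem \ref{thm:main1} gives the equivalences
\[ \Pol^\I_d\ \equi\ {}_{\mathcal{S}(m|n,d)}\smod, \qquad \Pol^\II_d\ \equi\ {}_{\Qc(n,d)}\smod, \]
so that classifying the simples of $\Pol^\I_d$ (respectively $\Pol^\II_d$) reduces to classifying the simple left supermodules of $\mathcal{S}(m|n,d)$ (respectively of $\Qc(n,d)$).

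The second step is to quote the existing classifications. Donkin \cite{Donkin} parametrizes the simple supermodules of $\mathcal{S}(m|n,d)$ by pairs of partitions $(\lambda,\mu)$ subject to the length bounds $\ell(\lambda)\le m$, $\ell(\mu)\le n$ and the weight condition $|\lambda|+p|\mu|=d$. Brundan and Kleshchev \cite{BKprojective}, working over an algebraically closed field, parametrize the simple supermodules of $\Qc(n,d)$ by $p$-strict partitions $\lambda$ of $d$ with $\ell(\lambda)\le n$, i.e.\ partitions of $d$ satisfying $\lambda_i\neq \lambda_{i+1}$ whenever $p\nmid \lambda_i$. Composing with the equivalences above immediately yields the desired bijections, provided the labeling sets genuinely match those displayed in the corollary.

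The only remaining point, which is more a bookkeeping check than a real obstacle, is to verify that the length restrictions appearing in \cite{Donkin,BKprojective} become vacuous under the hypothesis $m,n \ge d$, so that the resulting labeling sets are independent of the particular choice of $m,n$. This is clear: for any $\lambda, \mu \in \mathbb{P}$ with $|\lambda|+p|\mu|=d$ one has $\ell(\lambda)\le |\lambda|\le d \le m$ and $\ell(\mu)\le |\mu|\le d \le n$, and similarly any $p$-strict partition $\lambda$ of $d$ has $\ell(\lambda)\le d \le n$. Hence the labeling sets stabilize to those in the corollary, and both bijections follow.
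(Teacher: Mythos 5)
Your proof is correct and follows exactly the same route as the paper: combine the equivalences of Theorem \ref{thm:main1} with the classifications of simple supermodules over $\mathcal{S}(m|n,d)$ and $\Qc(n,d)$ from \cite{Donkin} and \cite{BKprojective}. The extra bookkeeping check that the length bounds become vacuous once $m,n\ge d$ is a reasonable and correct addition, though the paper leaves it implicit.
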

\smallskip

\subsection{Spin polynomial functors and Sergeev duality}
In this section we limit our attention to the objects $T\in \Pol^\II_d$.  
We may refer to such strict polynomial functors as  {\em spin 
polynomial functors}.  The explanation for this term is given by 
Theorem \ref{thm:main2} below, which describes a relationship 
between $\Pol^\II_d$ and finite dimensional representations 
of the Sergeev superalgebra, which is ``super equivalent" to the 
spin symmetric group algebra $\k^-\Si_d$ (cf. \cite{BKprojective}).

Let us denote $$\Pol^\II = \bigoplus_{d\ge 0} \Pol^\II_d.$$
There is a bifunctor $\ \Pol^\II \times \Pol^\II \rightarrow \Pol^\II$ 
given by the (external) tensor product 
$$ -\otimes- : \Pol^\II_d \times \Pol^\II_e \rightarrow \Pol^\II_{d+e},$$
defined in Section 2.5.

Suppose $M,N \in \svec_\k$.  Then $\Gamma^*(\ )$ satisfies the {\em exponential property}
\begin{equation}\label{eq:div4}
\Gamma^*(M\oplus N)\ \cong\ \Gamma^*M \otimes \Gamma^*N,
\end{equation}
which follows from (\ref{eq:div1}) and the corresponding properties for $D^*(\ )$ and 
$\Lambda^*(\ )$.
It follows from (\ref{eq:Gamma}) and (\ref{eq:div4}) that 
\begin{equation}\label{eq:exp}
\Gamma^d(M\oplus N) \ = \ \bigoplus_{i=0}^d \Gamma^{d-i}M \otimes \Gamma^iN.
\end{equation}
Recall the objects $\Gamma^{d,n} \in \Pol^\II_d$ which are projective by 
Yoneda's lemma (see Section \ref{sec:strictpoly}).  It  follows from 
(\ref{eq:exp}) that we have a decomposition 
\begin{equation}\label{eq:decomp.}
\Gamma^{d,m+n} \simeq \bigoplus_{i+j=d} \Gamma^{i, m} \otimes \Gamma^{j, n}
\end{equation}
of strict polynomial functors.

Now let $\Lambda(n,d)$ denote the set of all tuples 
$\lambda=(\lambda_1, \dots, \lambda_n)\in (\Z_{\geq 0})^n$ 
such that $\sum \lambda_i = d$.  Given $\lambda \in \Lambda(n,d)$, 
we will write 
$\Gamma^\lambda = \Gamma^{\lambda_1,1} \otimes \cdots \otimes \Gamma^{\lambda_n,1}$.
By (\ref{eq:decomp.}) and induction, we have a canonical isomorphism
\begin{equation}\label{eq:lambda}
\Gamma^{d,n} \simeq \bigoplus_{\lambda\in \Lambda(n,d)} \Gamma^\lambda.
\end{equation}
It follows that the objects $\Gamma^\lambda$ are projective in $\Pol^\II_d$.

Let $\omega=(1,\dots, 1) \in \Lambda(d,d)$.  Then $\Gamma^\omega = \otimes^d$, 
and $\otimes^d$ is  a projective object of $\Pol^\II_d$.  We have the following 
analogue of \cite[Theorem 6.2]{BKprojective}.

\begin{theorem}\label{thm:main2}
Assume $n \ge d$.
\begin{itemize}
\item[(i)] The left $\Qc(n,d)$-supermodule 
$V^{\otimes d} \simeq \hom_{\Pol^\II_d}(\Gamma^{d,n}, \otimes^d)$ 
is a projective object of $_{\Qc(n,d)}\smod$.
\smallskip
\item[(ii)] There is a canonical isomorphism of superalgebras: 
$$\eEnd_{\Pol^\II_d}(\otimes^d) \cong \W(d).$$

\item[(iii)] We have an exact functor 
$$\hom_{\Pol^\II_d}(\otimes^d, -): \Pol^\II_d \rightarrow {}_{\W(d)}\smod.$$
\end{itemize}
\end{theorem}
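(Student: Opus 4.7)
The plan is to push all three statements through the equivalence $\Pol^\II_d \equi {}_{\Qc(n,d)}\smod$ of Theorem \ref{thm:main1} and reduce the substantive content to Sergeev duality, which is imported from \cite{BKprojective}.

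For part (i), I would first apply Yoneda's lemma to obtain $\hom_{\Pol^\II_d}(\Gamma^{d,n}, \otimes^d) \simeq \otimes^d(\U(1)^{\oplus n}) = V^{\otimes d}$, which handles the stated identification as superspaces and (with the usual care about signs) as left $\Qc(n,d)$-supermodules. The paper has already observed, immediately before the theorem, that $\otimes^d \simeq \Gamma^\omega$ for $\omega=(1,\dots,1)\in\Lambda(d,d)$, so that $\otimes^d$ is a direct summand of the representable (hence Yoneda-projective) functor $\Gamma^{d,d}$ and is projective in $\Pol^\II_d$. Since the equivalence of Theorem \ref{thm:main1} is evaluation at $\U(1)^{\oplus n}$ and carries $\otimes^d$ to $V^{\otimes d}$, projectivity transfers.

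For part (ii), I would build a map $\alpha: \W(d) \to \eEnd_{\Pol^\II_d}(\otimes^d)$ by sending $w\in\W(d)$ to the natural transformation whose $V$-component (for $V\in \smod_{\C(1)}$) is the left multiplication by $w$ under the natural $\W(d) = \C(1)\wr\Si_d$-action on $V^{\otimes d}$. Naturality of $\alpha(w)$ follows from the lemma preceding Theorem \ref{thm:main1}: morphisms in $\Gamma^d_\Q$ are precisely $\W(d)$-equivariant linear maps between tensor powers, so they commute with the action of $w$. To see that $\alpha$ is a superalgebra isomorphism, I would evaluate at $V=\U(1)^{\oplus n}$ and use Theorem \ref{thm:main1} to obtain a canonical isomorphism $\eEnd_{\Pol^\II_d}(\otimes^d) \simeq \End_{\Qc(n,d)}(V^{\otimes d})$; the composition $\W(d) \to \End_{\Qc(n,d)}(V^{\otimes d})$ is then the Sergeev duality isomorphism for $n\ge d$, which I would cite from \cite{BKprojective}.

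Part (iii) is then essentially formal. Using the isomorphism of (ii), the precomposition action of $\eEnd_{\Pol^\II_d}(\otimes^d)$ on $\hom_{\Pol^\II_d}(\otimes^d, T)$ (with the standard sign convention for composition of natural transformations in the super setting) turns $\hom_{\Pol^\II_d}(\otimes^d, T)$ into a left $\W(d)$-supermodule, functorially in $T$. Exactness of $\hom_{\Pol^\II_d}(\otimes^d, -)$ is equivalent to projectivity of $\otimes^d$ in $\Pol^\II_d$, which was established in (i). The main obstacle is part (ii): although the categorical framework is formal, the identification $\W(d) \cong \End_{\Qc(n,d)}(V^{\otimes d})$ is the genuine Sergeev duality statement and must be imported as a black box. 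A secondary technical point is to track signs carefully so that $\alpha$ is a homomorphism of \emph{superalgebras} (not merely of underlying $\svec_\k$-objects), since composition in $\eEnd_{\Pol^\II_d}(\otimes^d)$ is $\Z_2$-graded.
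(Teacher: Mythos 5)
Your proposal is correct and follows essentially the same route as the paper's own (terse) proof: use Theorem~\ref{thm:main1} to transport projectivity of $\otimes^d=\Gamma^\omega$, invoke Sergeev duality from \cite{BKprojective} for the superalgebra isomorphism in (ii), and derive (iii) formally from (i) and (ii). You have merely spelled out the Yoneda step and the explicit construction of the map $\W(d)\to\eEnd_{\Pol^\II_d}(\otimes^d)$, which the paper leaves implicit.
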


\begin{proof}
(i) follows from Theorem \ref{thm:main1} and the fact that
 $\bigotimes^d$ is a projective object of $\Pol^{\II}_d$, 
 and  (ii) follows from Theorem \ref{thm:main1} and 
 \cite[Theorem 6.2.(iii)] {BKprojective}.  Finally, (iii) is a 
 direct consequence of (i) and (ii).
\end{proof}

\begin{remark}
One may refer to the functor in Theorem \ref{thm:main2}.(iii) 
as the {\em Sergeev duality functor}.  A similar functor 
related to classical Schur-Weyl duality was studied in 
\cite{HY} in the context of $\mathfrak{g}$-categorification.
\end{remark}
\medskip

\section{Categories of $\mathfrak{ssch}_\k$-enriched functors}\label{sec:enrich}
In this section, we provide an alternate definition of strict polynomial functors 
which is a `super analogue' of Friedlander and Suslin's original definition 
\cite[Definition 2.1]{FS}.  We also introduce categories $\ePol^\I_d$ and 
$\ePol^\II_d$ whose objects are homogeneous {\em $\ssch_\k$-enriched functors} 
between a pair of $\ssch_\k$-enriched categories.
Familiarity with the notation and material from Appendix B will be assumed 
throughout this section.

\subsection{Definition of $\ssch_\k$-enriched functors}
Recall that we may identify $\ssch_\k$ as a full subcategory of the functor 
category $\Fct(\salg_\k, \mathfrak{sets})$.  Given superschemes $X,Y\in \ssch_\k$, 
the functor $X\times Y$ is again a superscheme.  Let $I_0$ be a constant functor 
such that $I_0(\A)=\{0\}$ for all $\A \in \salg_\k$.  Then $I_0$ is an affine 
superscheme with $\k[I_0]=\k$.  The monoidal structure on the category 
$\mathfrak{sets}$ with respect to direct product induces a corresponding 
(symmetric) monoidal structure on  $\ssch_\k$, such that $I_0$ is an identity 
element.

Let $X,Y\in \ssch_\k$, with $X$ an affine superscheme.  An analogue 
of  \cite[I.1.3]{Jantzen} (Yoneda's lemma for ordinary schemes) gives a 
bijection
\begin{equation}\label{eq:Yoneda}
{\rm hom}_{\ssch_\k}(X,Y)\ \equi\ Y(\k[X]).
\end{equation}
Let $\B$ an associative superalgebra, and suppose $U,V,W \in {}_\B\smod$.
Then, there is a natural transformation
$$\Hom_\B(V,W)_a \times \Hom_\B(U,V)_a \rightarrow \Hom_\B(U,W)_a$$
given by the isomorphism (\ref{eq:tensor1}) and composition of $\A$-linear 
maps, for all $\A\in \salg_\k$.  We also have for each $V\in \svec_\B$ a 
natural transformation
$$ j_V: I_0 \rightarrow \End_\B(V)_a$$
which is the element of $\hom_{\ssch_\k}(I_0, \End_\B(V)_a)$ mapped onto 
$\Id_V \in \End_\B(V)_0$ under the bijection (\ref{eq:Yoneda}).
It then may be checked that we obtain an $\ssch_\k$-enriched category 
$_\B\esmod$ (in the sense of \cite{Kelly}) which has the same objects as 
$_\B\smod$ and {\em hom-objects}
$$ \hom_{_\B\esmod}(V,W) = \Hom_\B(V,W)_a$$  
for all $V,W\in {}_\B\smod$.  If $\B= \k$, we write $_\B\esmod= \esvec_\k$.

\begin{definition}
Suppose $\B$ is an associative superalgebra.  Let $\V= {}_{\B}\smod$, and let $\eV$
 denote the corresponding $\ssch_\k$-enriched category.  A 
 {\em $\ssch_\k$-enriched functor} (or {\em $\ssch_\k$-functor}) 
 $$T: \eV \rightarrow \esvec_\k$$ 
 consists of an assignment
$$ T(V) \in \svec_\k \quad (^\forall V\in \V)$$
and a morphism of superschemes
$$T_{V,W}: \Hom_\B(V,W)_a \rightarrow \Hom_\k(T(V),T(W))_a \quad (^\forall V,W\in \V),$$
such that the following two diagrams commute for all $U,V,W \in \V$:

$$\xymatrix{
I_0\ar[rd]_-{j_{T(V)}}\ar[r]^-{j_V}& \End_\B(V)_a\ar[d]^-{T_{V,V}}\\
&\End_\k(T(V))_a,
}$$
and
$$\xymatrix{
\Hom_\B(V,W)_a\times \Hom_\B(U,V)_a\ar[d]_-{T_{V,W}\times T_{U,V}} \ar[r] &\Hom_\B(U,W)_a\ar[d]^-{T_{U,W}} \\
\Hom_\k(T(V),T(W))_a \times \Hom_\k(T(U),T(V))_a \ar[r] &\Hom_\k(T(U),T(W))_a,
}$$
with horizontal maps being given by composition in $\eV$ and $\esvec_\k$, respectively.
\end{definition}

\subsection{The categories $\ePol_d^{(\dagger)}$ for $\dagger =$ I, II}
Notice that if $f: M\rightarrow N$ is an even linear map of vector 
superspaces, then $f$ may be identified with the associated 
natural transformation $\eta_f:M_a \rightarrow N_a$ which is 
given by the $\k$-linear maps
$$ \eta_f(\A) = f\boxtimes 1_\A: (M\otimes \A)_0 \rightarrow (N\otimes \A)_0,$$
for all $\A \in \salg_\k$.

\begin{definition}
Let $\V = {}_\B\smod$, and let $\eV= {}_\B\esmod$.
Suppose that $S,T: \eV\rightarrow \esvec_\k$ are both 
$\ssch_\k$-functors.  Then a {\em $\ssch_\k$-natural 
transformation}, $\alpha: S\rightarrow T$, is defined to 
be a collection of even $\k$-linear maps
$\alpha_V: S(V)\rightarrow T(V)$
such that the following diagram commutes for all $V,W \in \eV$:
$$\xymatrix{
\Hom_\B(V,W)_a \ar[d]_-{T_{V,W}} \ar[r]^-{S_{V,W}} &\Hom_\k(S(V),S(W))_a\ar[d]^-{\alpha_W\circ-} \\
\Hom_\k(T(V),T(W))_a  \ar[r]_-{-\circ\alpha_V} &\Hom_\k(S(V),T(W))_a,
}$$
where we have identified the even linear maps, $\alpha_W\circ -$ and 
$-\circ \alpha_V$, with their corresponding natural transformations as 
described in the preceding paragraph.  Denote by 
$\Fct_{\ssch_\k}(\eV, \esvec_\k)$ the category of all $\ssch_\k$-functors, 
$T: \eV \rightarrow \esvec_\k$, and $\ssch_\k$-natural transformations.
\end{definition}

Let $\eV= {}_\B\esmod$, and suppose $V\in {}_\B\smod$.  Given 
$T\in \Fct_{\ssch_\k}(\eV, \esvec_\k)$
consider the algebraic supergroup $G=GL_{\B,V}$ and recall that
$End_{B,V}= \End_\B(V)_a$.  Then, by the definition of $\ssch_\k$-functor, 
the induced natural transformation
$T_{V,V}: End_{\B,V} \rightarrow End_{\k, T(V)}$
 restricts to a natural transformation of supergroups,
$$\eta^{T,V}:G \rightarrow GL_{\k,T(V)},$$
which preserves identity and products.  Hence $\eta^{T,V}$ is 
a representation of the supergroup $G$.  

Now $T(V)$ may also be considered as a $G$-supermodule 
with a corresponding structure map $$\Delta_{T,V}: T(V) 
\rightarrow T(V)\otimes \k[G].$$  Notice that for any 
$M,N \in \svec_\k$, Yoneda's lemma gives a canonical isomorphism
\begin{equation}\label{eq:affine}
 \hom_{\ssch_\k}(M_a,N_a)\ \simeq\ (N\otimes \k[M_a])_0
 \end{equation}
for the corresponding affine superschemes.  Using 
(\ref{eq:affine}), let us identify the natural transformation 
$T_{V,V}$ with an element of the set 
$$(\End_\k(T(V)) \otimes \k[ \eEnd_\V(V)_a])_0.$$
It is then not difficult to see how $T_{V,V}$ gives rise to the 
structure map $\Delta_{T,V}$.  Hence the image of $\Delta_{T,V}$ 
lies in $T(V)\otimes\k[End_{\B,V}]$, and $T(V)$ is a polynomial representation 
of $G$.

\begin{definition}\label{def:enriched}
Let $\eV = {}_\B\esmod$.  We define $\Fct_{\ssch_\k}(\eV, \esvec_\k)_{(d)}$ 
to be the full subcategory of $\Fct_{\ssch_\k}(\eV, \esvec_\k)$ consisting 
of all $\ssch_\k$-enriched functors $T: \eV \rightarrow \esvec_\k$ such that 
$$T_{V,W} \in (\Hom_\k(T(V),T(W)) \otimes S^d (\Hom_\B(V,W)^\vee)_0$$
for all $V,W \in {}_\B\smod$ (where we have identified both sides of (\ref{eq:affine})).
We write
$$\ePol^{\I}_d= \Fct_{\ssch_\k}(\esvec_\k, \esvec_\k)_{(d)},  \mbox{ and }$$ 
$$\ePol^{\II}_d= \Fct_{\ssch_\k}(_{\C(1)}\esmod, \esvec_\k)_{(d)}.$$
From Theorem \ref{thm:main3} below, it follows that these categories are
equivalent to $\Pol^\I_d$ and $\Pol^\II_d$ respectively.
\end{definition}
\smallskip

\subsection{Polynomial representations of $GL(m|n)$ and $Q(n)$}
Suppose $m,n$ are fixed nonnegative integers.    Let us write 
$\Sc^\mathrm{I} = \Sc(m|n,d)$ and $\Sc^\mathrm{II} = \Qc(n,d)$.
We also write $G^\mathrm{I} = GL(m|n)$ and $G^\mathrm{II} = Q(n)$.
If $\dagger =$ I, let 
$V_l= V_r = \k^{m|n}\in \svec_\k$,and if $\dagger=$ II, let 
$V_l= \U_l(1)^{\oplus n}\in _{\C(1)}\smod$ and 
$V_r= \U(1)^{\oplus n}\in \smod_{\C(1)}$. 

\begin{theorem}\label{thm:main3}
 Suppose $m,n \ge d$.  Then we have equivalences of categories:
$$  \mathrm{(i)}\  \Psi:  \mathfrak{pol}_d(G^\dagger) \equi {}_{\Sc^\dagger}\smod, 
\qquad \mathrm{(ii)}\   \Phi: \ePol^{(\dagger)}_d \equi \Pol^{(\dagger)}_d,$$
for $\dagger =$ I, II respectively.  
\end{theorem}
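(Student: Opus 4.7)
My plan is to prove the two equivalences separately: part (i) is an instance of the classical passage from comodules over a finite-dimensional coalgebra to modules over the dual algebra, while part (ii) is the substantive identification of the $\ssch_\k$-enriched formalism with the divided-power formalism, which I would carry out via the duality $S^d(M^\vee)\simeq(\Gamma^d M)^\vee$ from (\ref{eq:duality1}).

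For (i), I would show that a polynomial representation of $G^\dagger$ homogeneous of degree $d$ is, essentially by the definition of ``polynomial representation'' given in Section 4.2 via the structure map $\Delta_{T,V}$, the same data as a right comodule over the coalgebra $A^\dagger(d)$, namely the degree-$d$ homogeneous part of the coordinate superbialgebra of the corresponding endomorphism supermonoid: $A^\I(d)=S^d(\End_\k(\k^{m|n})^\vee)$ and $A^\II(d)$ its $\Qc$-analogue built from $\End_{\C(1)}(\U(1)^{\oplus n})$. Since $m,n\ge d$ the coalgebra $A^\dagger(d)$ is finite-dimensional, so the standard equivalence between finite-dimensional comodules and modules over the dual algebra yields the equivalence $\Psi$; the identification $A^\dagger(d)^\vee\cong\Sc^\dagger$ is precisely the definition used in \cite{Donkin} and \cite{BKprojective}, and is compatible with the Schur/Sergeev-duality description of $\Sc^\dagger$ used in Section 3.2.

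For (ii), I would construct $\Phi:\ePol^{(\dagger)}_d\to\Pol^{(\dagger)}_d$ as follows. Given $T\in\ePol^{(\dagger)}_d$, Definition \ref{def:enriched} supplies, for each $V,W$, an element
\[
T_{V,W}\in\bigl(\Hom_\k(T(V),T(W))\otimes S^d(\Hom_\B(V,W)^\vee)\bigr)_0,
\]
which via the even isomorphism $S^d(M^\vee)\simeq(\Gamma^d M)^\vee$ of (\ref{eq:duality1}) corresponds canonically to an even $\k$-linear map
\[
\widetilde T_{V,W}\colon \Gamma^d\Hom_\B(V,W)\longrightarrow\Hom_\k(T(V),T(W)).
\]
I define $\Phi(T)(V)=T(V)$ on objects and $\Phi(T)=\widetilde T_{V,W}$ on morphisms; the unit and composition diagrams required for $T$ to be $\ssch_\k$-enriched translate, via (\ref{eq:affine}) and (\ref{eq:duality1}), into exactly the identity and composition axioms making $\Phi(T)$ a $\k$-linear functor on $\Gamma^d_\dagger$. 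An $\ssch_\k$-natural transformation $\alpha:S\to T$ is already given by even $\k$-linear maps $\alpha_V:S(V)\to T(V)$, so $\Phi$ extends to morphisms by the identity assignment. An inverse functor is obtained by reversing the dualization, and mutual inversion is immediate from the naturality of (\ref{eq:duality1}) in $M$, together with Lemma \ref{lem:Schur}/Theorem \ref{thm:Wedb} ensuring the relevant hom-spaces are as expected.

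The main obstacle is checking that the composition law in $\Gamma^d_\dagger$, which by the unnamed lemma in Section 3.1 is inherited from composition of $\B\wr\Si_d$-homomorphisms on tensor powers $V^{\otimes d}\to W^{\otimes d}$, is correctly matched under (\ref{eq:duality1}) with the comultiplication on $S^*(\Hom_\B(V,W)^\vee)$ (which governs composition of polynomial morphisms of superschemes via (\ref{eq:affine})). Equivalently, one must verify that under the pairing $\Gamma^d M\otimes S^d(M^\vee)\to\k$, the diagonal $S^d\to S^d\otimes S^d$ dualizes to the symmetrized tensor composition defining composition in $\Gamma^d_\dagger$. The super signs here --- the Koszul signs in $S^d$ versus the signed $\Si_d$-action defining $\Gamma^d$ --- are subtle but can be pinned down in a basis using Lemma \ref{lem:divided} and the explicit pairing (\ref{eq:dual}); once this compatibility is established, the rest of the argument is formal.
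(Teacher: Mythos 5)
Your overall strategy for part (ii) --- converting the enriched morphism $T_{V,W}$, via the duality $S^d(M^\vee)\simeq(\Gamma^d M)^\vee$, into a $\k$-linear map on divided powers, and then checking that enriched functoriality translates into $\k$-linear functoriality on $\Gamma^d_\dagger$ --- is the same as the paper's. But there is a genuine gap in the way you set this up, and it is not just a matter of checking signs in a basis.

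Your map $\widetilde T_{V,W}\colon\Gamma^d\Hom_\B(V,W)\to\Hom_\k(T(V),T(W))$ is taken with $V,W$ objects of $\eV={}_\B\esmod$, that is, \emph{left} $\B$-supermodules. But the target category $\Gamma^d_\Q=\Gamma^d\smod_{\C(1)}$ is built from \emph{right} $\C(1)$-supermodules, so for $\dagger=\mathrm{II}$ your $\widetilde T_{V,W}$ does not land in the hom-spaces of $\Gamma^d_\Q$ at all; and even for $\dagger=\mathrm{I}$ the Koszul sign in the canonical identification $(M^\vee)^\vee\cong M$ introduces a discrepancy. The precise statement, which the paper isolates as Proposition \ref{prop:cosalg}.(iii), is the superalgebra isomorphism $S^d(\B^\vee)^\vee\cong\Gamma^d(\B^-)$ --- with the sign-twisted $\B^-$, not $\B$. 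Consequently dualizing $T_{V,W}$ naturally yields a map out of $\Gamma^d$ of the \emph{twisted} hom-space $\hom_\V(V,W)^-$, i.e.\ a map on $\Gamma^d(\V^-)$. The paper then applies the equivalences $(\svec_\k)^-\equi\svec_\k$ and $({}_{\C(1)}\smod)^-\equi\smod_{\C(1)}$ of (\ref{eq:k minus}) and (\ref{eq:minus2}) to identify $\Gamma^d(\V^-)$ with $\Gamma^d_\M$, $\Gamma^d_\Q$ respectively; this is what resolves both the left/right mismatch and the Koszul sign simultaneously, and cannot be replaced by a basis computation via Lemma \ref{lem:divided}, which would only confirm that the maps you construct land in the wrong category. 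Similarly for part (i): the paper does not invoke a definitional identity but proves $\Sc^\dagger\cong(\k[End_{\B,V_l}]_d)^\vee$ by the same $\B^-$/$\V^-$ route, since its working description of $\Sc^\dagger$ (Section 3.2) is $\Gamma^d\End_\B(V_r)$ for a right module $V_r$, while $V_l$ is a left module. Your reduction of (i) to the finite-dimensional comodule/module duality is fine in spirit, but the nontrivial content is precisely the superalgebra isomorphism between the two realizations of $\Sc^\dagger$, which your write-up elides.
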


\begin{proof}
Proof of (i).  Let $\B=\k,\ \C(1)$ if $\dagger =$ I, II respectively. 
It suffices to show that we have an isomorphism of superalgebras
$$ \Sc^\dagger \cong (\k[\End_\B(V_l)]_d)^\vee.$$
Using  Proposition \ref{prop:cosalg}.(iii), 
(\ref{eq:minus1}), (\ref{eq:k minus}) and (\ref{eq:minus2}), we have
\begin{align*}
(\k[End_{\B,V_l}]_d)^\vee &=\ S^d(\End_\B(V_l)^\vee)^\vee\\
&\cong\ \Gamma^d(\End_\B(V_l)^-)\ \cong\ \Gamma^d(\End_\B(V_r))\quad =\ \Sc^\dagger.
\end{align*}

Proof of (ii).  
Let $\V= {}_\B\smod$ for $\B$ as above.  Then we identify $\V^-$
with $\svec_\k,\ \smod_{\C(1)}$ respectively, using (\ref{eq:k minus})
and (\ref{eq:minus2}).  Hence the objects of $\V$ are identical to the 
objects of either $\V^-$ or $\Gamma^d(\V^-)$ respectively. 

Suppose $T\in \ePol^\dagger_d$.  We will define a functor
$\Phi(T): \Gamma^d(\V^-) \rightarrow \svec_\k$.  Given $V\in \V$, let 
$\Phi(T)(V)= T(V)\in \svec_\k$. 
Now suppose $V,W\in \V^-$.  We have a map
\begin{align*}
T_{V,W}& \in\ S^d(\hom_\V(V,W)^\vee)\otimes \Hom\left(T(V),T(W)\right)\\
&\cong\ \Hom\left(\Hom(T(V),T(W))^\vee,\ S^d(\hom_\V(V,W)^\vee)\right)\\
&\cong\ \Hom\left(\Gamma^d \hom_{\V^-}(V,W),\ \Hom(T(V),T(W))\right).
\end{align*}
Let $\Phi(T)_{V,W}: \hom_{\Gamma^d(\V^-)}(V,W) \rightarrow \Hom(T(V),T(W))$
denote the image of $T_{V,W}$ under the above isomorphism.  Then it may
be checked that $\Phi(T) \in \Fct(\Gamma^d(\V^-), \svec_\k)$, and that this 
gives an equivalence of categories
$$\Phi: \ePol^\dagger_d \equi \Pol^\dagger_d$$
which maps $T\mapsto \Phi(T)$.
\end{proof}

\begin{corollary} Suppose $m,n \ge d$, and let $V_l, V_r$ be
as above.  Then we have a commutative diagram 
$$\xymatrix{
\ePol^{(\dagger)}_d\ar[d] \ar[r]^-{\Phi}  &\Pol^{(\dagger)}_d \ar[d]\\
\mathfrak{pol}_d(G^\dagger)  \ar[r]^-{\Psi}   &{}_{\Sc^\dagger}\smod 
 &(\dagger= \mathrm{I,II}),}
$$
where the vertical arrow on the left is evaluation at $V_l$ and
the vertical arrow on the right is evaluation at $V_r$.  In particular,
evaluation at $V_l$ gives an equivalence $\ePol^{(\dagger)}_d
\equi \pol_d(G^\dagger)$ for $\dagger =$ I, II respectively.
\end{corollary}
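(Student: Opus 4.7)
The plan is to verify commutativity of the square directly and then deduce the ``in particular'' statement by a three-out-of-four argument. Three of the four functors in the square are already known to be equivalences: $\Phi$ by Theorem \ref{thm:main3}(ii), $\Psi$ by Theorem \ref{thm:main3}(i), and evaluation at $V_r$ on the right by Theorem \ref{thm:main1}. So once the square is known to commute up to natural isomorphism, evaluation at $V_l$ on the left must also be an equivalence.

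For the commutativity check, I would fix $T \in \ePol^{(\dagger)}_d$ and compare $\Psi(T(V_l))$ with $\Phi(T)(V_r)$. The identifications $(\svec_\k)^- \equi \svec_\k$ and $({}_{\C(1)}\smod)^- \equi \smod_{\C(1)}$ of (\ref{eq:k minus}) and (\ref{eq:minus2}) match $V_l$ with $V_r$ as objects (using Remark \ref{rem:cliff1} in type II, where $\overline{\C(1)}$ is commutative, so $\U_l(1)$ and $\U(1)$ coincide at the object level), so the underlying superspaces of $\Psi(T(V_l))$ and $\Phi(T)(V_r)$ both coincide with $T(V_l)$. What remains is to match the two $\Sc^\dagger$-actions. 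Both are extracted from the single datum
$$T_{V_l,V_l} \in \End(T(V_l)) \otimes S^d(\End_\B(V_l)^\vee),$$
which encodes the $\ssch_\k$-functor structure at $V_l$ via (\ref{eq:affine}). Along the $\Psi$-route, $T_{V_l,V_l}$ is exactly the degree-$d$ component of the comultiplication $\Delta_{T,V_l}$, and the $\Sc^\dagger$-action is its transpose under the isomorphism $\Sc^\dagger \cong S^d(\End_\B(V_l)^\vee)^\vee$ established in the proof of Theorem \ref{thm:main3}(i). Along the $\Phi$-route, the map $\Phi(T)_{V_r,V_r} \colon \Gamma^d \End_\B(V_r) \to \End(T(V_r))$ is by construction the image of $T_{V_l,V_l}$ under the hom-adjunction in the proof of Theorem \ref{thm:main3}(ii), transported along $\Gamma^d \End_\B(V_r) \cong \Gamma^d(\End_\B(V_l)^-) \cong S^d(\End_\B(V_l)^\vee)^\vee$. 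Both actions therefore amount to contracting $T_{V_l,V_l}$ against the canonical perfect pairing $\Gamma^d A \otimes S^d(A^\vee) \to \k$ with $A = \End_\B(V_l)$, and so coincide. Naturality in $T$ is automatic from the functoriality of $\Phi$, $\Psi$, and the two evaluation functors.

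The main obstacle is the bookkeeping of parity changes and dual pairings: particularly in type II, one has to verify that the chain $\Sc^\dagger \cong \Gamma^d \End_\B(V_r) \cong \Gamma^d(\End_\B(V_l)^-) \cong S^d(\End_\B(V_l)^\vee)^\vee$ used in the proof of Theorem \ref{thm:main3}(i) is sign-compatible with the hom-adjunction used to construct $\Phi$ in the proof of Theorem \ref{thm:main3}(ii), and that the superalgebra identification $\End_\B(V_l)^- \cong \End_\B(V_r)$ from (\ref{eq:minus1}) and (\ref{eq:minus2}) respects this. Since both constructions ultimately come from the same canonical $\Gamma^d/S^d$ pairing together with the identifications (\ref{eq:k minus}), (\ref{eq:minus1}), and (\ref{eq:minus2}), once everything is rewritten in those terms the coincidence of the two actions reduces to a direct unwinding of definitions, and the corollary follows.
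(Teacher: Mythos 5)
Your proposal is correct and takes essentially the same route as the paper: both are three-out-of-four arguments, citing Theorem \ref{thm:main1} for the right vertical arrow and Theorem \ref{thm:main3} for $\Phi$ and $\Psi$, and then checking commutativity directly from the definitions. The paper merely asserts that commutativity ``is not difficult to see from the definitions'' while you spell out the matching of the two $\Sc^\dagger$-actions extracted from $T_{V_l,V_l}$; this is a welcome elaboration rather than a different argument.
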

\begin{proof}
We know that the vertical arrow on the left is an equivalence by
Theorem \ref{thm:main1}.  It is then not difficult to see from the 
definitions of the functors $\Phi$ and $\Psi$ that the diagram
is combative.  Hence, from Theorem \ref{thm:main3} the 
commutativity implies that the evaluation at $V_r$ also gives
an equivalence.
\end{proof}
\medskip

\appendix

\section{Representations of $\svec_\k$-enriched categories}\label{app:repns}

Recall that $\k$ is a field of characteristic not equal 2, and $\svec_{\k}$ 
denotes the category of finite dimensional vector superspaces over $\k$.  
Suppose $\V$ is a category enriched over $\svec_{\k}$.  
In this appendix we describe the relationship between the following two categories:
\begin{itemize}
\item[(i)] The category $\V \text{-}\smod = \Fct_\k(\V, \svec_\k)$ of all $\k$-linear 
{\em representations} of $\V$.  It consists of all even $\k$-linear functors $\V\to\svec_\k$.
\medskip
\item[(ii)] If $P\in\V$, then $\E= \End_\V(P)$ is an associative superalgebra 
with product given by composition.  We may then consider the category 
$_\E\smod$ of finite dimensional left supermodules over $\E$.
\end{itemize}

The categories $\V \text{-}\smod$ and $_\E\smod$ are both $\svec_\k$-enriched 
categories.  We denote by $(\V\text{-}\smod)_{\rm ev}$, $(_\E\smod)_{\rm ev}$ 
the corresponding even subcategories.  Recall from Section \ref{sec:salg} that 
$(_\mathcal{A}\smod)_{\rm ev}$ is an abelian category for any finite dimensional 
superalgebra $\mathcal{A}$.  In particular, $(_\E\smod)_{\rm ev}$ and 
$(\svec_\k)_{\rm ev}$ are both abelian categories.  Now since direct sums, products, 
kernels and cokernels can be computed objectwise in (the even subcategory of) the target 
category $\svec_\k$, we see that $(\V\text{-}\smod)_{\rm ev}$ is also an abelian category.

The relationship between $\V\text{-}\smod$ and $_\E\smod$ is given by evaluation 
on $P$.  If $F \in \V\text{-}\smod$, the (even) functoriality of $F$ makes the $\k$-superspace 
$F(P)$ into a supermodule over $\E = \eEnd_\V(P)$.  We thus have an 
evaluation functor:
$$ \V\text{-}\smod \to\ _\E\smod$$
$$ F \mapsto F(P)$$
There is another interpretation of this evaluation functor.  Since the covariant hom-functor 
$h^P:= \hom_\V(P,-)$ is an even $\k$-linear functor, it must belong to $\V\mbox{-}\smod$.  
In this situation, Yoneda's lemma takes the form of an even isomorphism
$$ \hom_{\V\mbox{-}\smod}(h^P, F)\ \simeq\ F(P),$$
for any $F \in \V\mbox{-}\smod$.  In particular, 
$$ \E\ =\ h^P(P)\ \simeq\ \eEnd_{\V\mbox{-}\smod}(h^P).$$
Hence, Yoneda's lemma allows us to interpret ``evaluation at $P$" 
as the functor $\hom_{\V\mbox{-}\smod}(h^P, -): \V\mbox{-}\smod \to{} _\E\smod$.

We are interested to know if there is some condition on $P$ which ensures that evaluation 
is in fact an equivalence of categories.  The next proposition, which is a super analogue of 
\cite[Prop. 7.1]{TouzeRingel}, provides such a criterion.  

Note that the parity change functor, $\Pi: \svec_\k \rightarrow \svec_\k$, induces
by composition a functor $\Pi\circ- : \V\mbox{-}\smod\to \V\mbox{-}\smod$.

\begin{proposition}\label{prop:appendix}
Let $\V$ be an $\svec_\k$-enriched category. Assume that there exists an object $P\in\V$ 
such that for all $X,Y\in\V$, the composition induces a surjective map
$$ \hom_\V(P,Y)\otimes \hom_\V(X,P)\twoheadrightarrow \hom_\V(X,Y)\;.$$
Then the following hold.
\begin{enumerate}
\item[(i)] For all $F\in\V\mbox{-}\smod$ and all $Y\in\V$, the canonical map 
$F(P)\otimes \hom_\V(P,Y)\to F(Y)$ is surjective.
\item[(ii)]  The set  $\{h^P, \Pi h^P\}$ is a projective generator of $(\V\mbox{-}\smod)_{\rm ev}$, 
where  $h^P = \hom_\V(P, -)$ as above.
\item[(iii)] Let $\E = \eEnd_\V(P)$.  Then evaluation on $P$ induces an 
equivalence of categories $\V\mbox{-}\smod\simeq{}  _\E\smod$.
\end{enumerate}
\end{proposition}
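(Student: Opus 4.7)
The plan is to prove (i), (ii), (iii) in order, each feeding into the next. For (i), I would apply the surjectivity hypothesis with $X = Y$ to write $\mathrm{id}_Y = \sum_i \psi_i \circ \eta_i$ with $\eta_i \in \hom_\V(Y,P)$ and $\psi_i \in \hom_\V(P,Y)$. For any $w \in F(Y)$, even functoriality of $F$ yields $w = F(\mathrm{id}_Y)(w) = \sum_i F(\psi_i)(F(\eta_i)(w))$, exhibiting $w$ as the image of $\sum_i F(\eta_i)(w) \otimes \psi_i$ under the canonical map $F(P) \otimes \hom_\V(P,Y) \to F(Y)$.

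For (ii), Yoneda's lemma supplies natural isomorphisms
$$\hom_{(\V\text{-}\smod)_\ev}(h^P, F) \simeq F(P)_0, \qquad \hom_{(\V\text{-}\smod)_\ev}(\Pi h^P, F) \simeq F(P)_1.$$
Kernels and cokernels in $(\V\text{-}\smod)_\ev$ are computed objectwise in $(\svec_\k)_\ev$, so both functors $F \mapsto F(P)_0$ and $F \mapsto F(P)_1$ are exact; hence $h^P$ and $\Pi h^P$ are projective. If both Hom groups vanish for some $F$, then $F(P) = 0$, and (i) forces $F(Y) = 0$ for every $Y$, so $F = 0$. Thus $\{h^P, \Pi h^P\}$ is a generating set.

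For (iii), I would follow the standard Morita-style argument adapted to the super setting. Evaluation at $P$ sends the projective generators $h^P, \Pi h^P$ of $(\V\text{-}\smod)_\ev$ to $\E$ and $\Pi\E$, which are projective generators of $(_\E\smod)_\ev$. By (ii), every $F \in (\V\text{-}\smod)_\ev$ admits a finite presentation $Q_1 \to Q_0 \to F \to 0$ with $Q_0, Q_1$ finite direct sums of $h^P$ and $\Pi h^P$ (finiteness uses finite-dimensionality of $F(P)$ together with (i) applied to the kernel of the covering map). A quasi-inverse is the functor sending $M \in{} _\E\smod$ to $Y \mapsto \hom_\V(P,Y) \otimes_\E M$, where $\hom_\V(P,Y)$ carries its natural right $\E$-supermodule structure by precomposition; on generators this restores $\E \mapsto h^P$ and $\Pi\E \mapsto \Pi h^P$. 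That the two compositions are naturally equivalent to the identity follows by computing directly on $\E, \Pi\E$ (respectively $h^P, \Pi h^P$) and extending to arbitrary objects via the presentation using exactness of evaluation and right exactness of the tensor product, combined with a five-lemma comparison.

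The principal technical obstacle is the coherence of super sign conventions: in particular, the right $\E$-supermodule structure on $\hom_\V(P,Y)$, the verification that the tensor product $\hom_\V(P,Y) \otimes_\E M$ is an even $\k$-linear functor in $Y$, and checking that the unit and counit of the adjunction between evaluation and the quasi-inverse are even natural isomorphisms rather than merely natural up to a parity twist.
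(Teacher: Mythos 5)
Your argument is correct and is the paper's argument up to presentation. Parts (i) and (ii) coincide with the paper's: for (i) you express $\mathrm{id}_Y$ as a finite sum factoring through $P$ and push it through $F$, exactly as the paper does; for (ii) the only difference is that you conclude ``generator'' from the criterion that $\hom$-vanishing into $F$ forces $F=0$, whereas the paper produces the epimorphism $(h^P)^{\oplus m}\oplus(\Pi h^P)^{\oplus n}\twoheadrightarrow F$ directly from (i) together with finite-dimensionality of $F(P)$ --- for projective objects these two formulations are interchangeable by a standard upgrade. In (iii) you make the quasi-inverse explicit, namely $M\mapsto\bigl(Y\mapsto\hom_\V(P,Y)\otimes_\E M\bigr)$, and compare the composites with the identity by a five-lemma argument on presentations, whereas the paper separately verifies fully-faithfulness via left-exactness of $\hom(-,F)$ and $\Hom_\E(-,F(P))$ on a presentation $h^P\otimes\k^{m_2|n_2}\to h^P\otimes\k^{m_1|n_1}\to G\to 0$, and then obtains essential surjectivity by using full faithfulness to lift a presentation of $M\in{}_\E\smod$ and defining the preimage functor as an objectwise cokernel. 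These are two standard Morita-theoretic wrap-ups that buy essentially the same thing; your version is a bit cleaner in exhibiting the inverse, while the paper's avoids invoking right-exactness of $\otimes_\E$. The sign-coherence concerns you raise at the end are genuine but routine in this super setting and do not affect validity.
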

\begin{proof}
Proof of (i).  The canonical map is: $f\otimes x \mapsto F(f)(x)$.  By the surjectivity of 
$\hom_\V(P,Y) \otimes \hom_\V(Y,P) \to \hom_\V(Y,Y)$, we may find a finite family 
of maps, $\alpha_i \in \hom_\V(P,Y)$ and $\beta_i \in \hom_\V(Y,P)$, such that
$$ \sum_i \beta_i \circ \alpha_i = {\rm Id}_Y.$$
Now suppose that $y \in F(Y)$.  Then one may check that the element
$$ \sum_i \beta_i \otimes F(\alpha_i)(y) \in \hom_\V(Y,P) \otimes F(P)$$
is sent onto $y$ by the canonical map.

Proof of (ii).  The Yoneda isomorphism $\hom_{\V\text{-}\smod}(h^P, F) \simeq F(P)$ 
ensures that $h^P$ is projective.  One may check that $\Pi h^P$ is then also a 
projective object of $(\V\text{-}\smod)_{\rm ev}$.  Next, by the naturality of the 
canonical map, (i) yields an epimorphism $h^P \otimes F(P) \twoheadrightarrow F$.  
Now $F(P)$ is a finite dimensional superspace.  By choosing a ($\Z_2$-homogeneous) 
basis of $F(P)$, we have $F(P) \simeq \k^{m|n}$ where $\sdim(P) = (m,n)$.  
Hence, there exists an epimorphism 
$\varphi: (h^P)^{\oplus m} \oplus (\Pi h^P)^{\oplus n} \twoheadrightarrow F$,   and we 
may write $\varphi= \varphi_1+ \cdots \varphi_m+ \varphi'_1+\cdots+ \varphi'_n$ for 
some $\varphi_i: h^P\to F$ (resp. $\varphi'_j: \Pi h^P \to F$), where $i=1, \dots, m$ 
(resp. $j=1, \dots, n$).  Then we may finally decompose 
\[F\ =\ \bigoplus_{i=1}^m F_i\ \oplus\ \bigoplus_{j=1}^n F'_j,\]
 where $F_i = {\rm Im}(f_i)$ (resp. $F'_j = {\rm Im}(f'_j)$).  It then follows that 
 $\{h^P, \Pi h^P\}$ is a generating set.

Proof of (iii).  
We first verify that evaluation is fully faithful.  For this purpose, it suffices to check for 
any $F,G \in \V\mbox{-}\smod$ that we have an isomorphism:
$\hom_{\V\mbox{-}\smod}(G,F)\ \simeq\ \Hom_\E(G(P),F(P)).$
Notice that there is a commutative triangle:
$$\xymatrix{
\hom_{\V\mbox{-}\smod}(h^P,F)\ar[d]\ar[r]^-{\simeq}& F(P)\\
\Hom_{\E}(\E,F(P))\ar[ru]_-{\simeq}
},$$
where the horizontal arrow is the Yoneda isomorphism, and the diagonal arrow is 
the isomorphism (\ref{eq:superalg}) from Section \ref{sec:salg}.  Hence the diagram 
induces an (even) isomorphism.  By additivity of homs, we also have an isomorphism
\[ \hom_{\V\mbox{-}\smod}(h^p\otimes\k^{m|n}, F)\ \simeq\ \Hom_\E(\E\otimes\k^{m|n}, F(P)),\] 
for any $m,n \in \mathbb{N}$.  Now by (ii) we may find (for any 
$G \in \V\mbox{-}\smod$) an exact sequence
\begin{equation}\label{eq:exact}
 h^P \otimes \k^{m_2|n_2} \to\ h^P \otimes \k^{m_1|n_1} \to\ G \to\ 0.
 \end{equation}
It then follows by the left exactness of $\hom_{\V\mbox{-}\smod}(-,F)$ and 
$\Hom_\E(-, F(P))$ that evaluation on $P$ is fully faithful.

Next, we verify that evaluation is essentially surjective.  Suppose 
$M\in{} _\E\smod$.  If follows from (\ref{eq:exact}) that 
one may find a presentation of the form
\[
\E\otimes \k^{m_2|n_2}\quad \equi\quad \E\otimes \k^{m_1|n_1}\ \twoheadrightarrow\ M.
\]
Since evaluation on $P$ is fully faithful, there exists a natural transformation 
$\varphi: h^P \otimes \k^{m_2|n_2} \to h^P \otimes \k^{m_1|n_1}$ which 
coincides with $\psi$ upon evaluation at $P$.  Let us define a functor 
$F_M: \V \to \svec_\k$ by $F_M(X) = {\rm coker}(\varphi_X)$.  Then 
$F_M \in \V\mbox{-}\smod$ is a functor whose evaluation at $P$ is 
isomorphic to $M$.  Thus, evaluation at $P$ is essentially surjective.
\end{proof}
\medskip

\section{Superschemes and supergroups}\label{app:superscheme}
We briefly recall the definitions and some basic properties 
of cosuperalgebras, superschemes and supergroups.  For 
more details, see \cite{BKprojective}, \cite{BKmodular} and the references 
therein.

\subsection{Cosuperalgebras}
A {\em cosuperalgebra} is a superspace $\A$ which is a 
coalgebra in the usual sense such that the comultiplication 
$\Delta_\A: \A \rightarrow \A\otimes \A$ and the counit 
$\epsilon:\A\rightarrow \k$ are even linear maps.  The
notions of {\em bisuperalgbra} and {\em Hopf cosuperalgebra}
can be defined similarly.

If $\A$ is a cosuperalgebra, a right $\A$-cosupermodule is a
vector superspace $M$ together with a {\em structure map}
$\Delta_M: M \rightarrow M\otimes \A$ which is an even
linear map that makes $M$ into an ordinary comodule.  
Denote by $\cosmod_\A$ the category of all right 
$\A$-cosupermodules and $\A$-cosupermodule 
homomorphisms (which are just ordinary $\A$-comodule
homormorphisms).

If $\B$ is a finite dimensional associative superalgebra, 
then multiplication in $\B$ gives an even linear map 
$m : \B \otimes \B \rightarrow \B$.  Taking the dual of 
this map we obtain a linear map 
$\Delta = m^\vee: \B^\vee \rightarrow (\B \otimes \B)^\vee = \B^\vee \otimes \B^\vee, $ 
such that
$$ \la \Delta(f), a\otimes b\ra = (-1)^{|\Delta| |f|}\la f, ab\ra = \la f, ab\ra $$
(since $| \Delta| = |m| = 0$), for $a,b\in \B, f\in \B^\vee$.  This map $\Delta$ makes $\B^\vee$ into a 
cosuperalgebra.  

Conversely, suppose that $\A$ is a 
finite dimensional cosuperalgebra.  Then we make 
$\A^\vee$ into a superalgebra by defining the product
$fg$ of $\Z_2$-homogeneous $f,g \in \A^\vee$
as 
$$\la fg,a\ra := \la f\boxtimes g, \Delta_\A(a)\ra,$$
for all $a\in \A$.  Recall from \cite{BKprojective} that 
there is an equivalence (in fact isomorphism) of categories 
between $\cosmod_\A$ and $_{\A^\vee}\smod$.

Suppose $\B$ is an associative superalgebra.  Then
$\Si_d$ acts (on the right) on $\B^{\otimes d}$ via 
superalgebra automorphisms.  Hence, $\Gamma^d \B = 
(\B^{\otimes d})^{\Si_d}$ is also a superalgebra.

Now let $\A$ be a cosuperalgebra.  Since $T^*\A$ is the free
associative superalgebra generated by $\A$ (considered as
a superspace), there is a unique superalgebra homomorphism
$$\Delta: T^*\A \rightarrow T^*\A \otimes T^*\A$$
such that $\Delta(a)= \Delta_\A(a)$ for all $a\in \A$,
and $T^*\A$ is a cosuperalgebra with respect to this
homomorphism.
Similarly, since $S^*\A$ is a free commutative superalgebra, there 
exists a unique superalgebra homomorphism
$$\widetilde{\Delta}: S^*\A \rightarrow S^*\A \otimes S^*\A$$
such that $\widetilde{\Delta}(a)= \Delta_\A(a)$ for all $a\in \A$.
(We note that a tensor product of commutative superalgebras is
commutative.)  The homomorphism $\widetilde{\Delta}$ makes
$S^*\A$ into a cosuperalgebra.

One may check that we have
$$ \Delta(T^d\A) \subseteq T^d\A \otimes T^d\A\quad \mbox{and}\quad
\widetilde{\Delta}(S^d\A) \subseteq S^d\A \otimes S^d\A.$$
Hence, both $T^d\A$ and $S^d\A$ may be considered as
cosuperalgebras by restricting $\Delta$ and 
$\widetilde{\Delta}$ respectively.
\smallskip

\begin{proposition}\label{prop:cosalg}
Suppose $\B$ (resp. $\A$) is a finite dimensional associative 
superalgebra (resp. cosuperalgebra).  
Then we have the following isomorphisms of superalgebras.
\begin{itemize}
\item[(i)] $(\B^\vee)^\vee\ \cong\ \B^-$, where $\B^-$ is defined in Section \ref{sec:salg}.
\medskip
\item[(ii)] $(\A^\vee)^{\otimes d} \cong (\A^{\otimes d})^\vee$
\medskip
\item[(iii)] $S^d(\B^\vee)^\vee \cong \Gamma^d(\B^-)$
\end{itemize}
\end{proposition}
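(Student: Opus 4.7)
The plan is to prove (i) first by a direct computation, then derive (ii) by iterating (\ref{eq:isom3}), and finally deduce (iii) from (i) combined with the duality (\ref{eq:duality1}). For (i), I take $\Phi: \B \to (\B^\vee)^\vee$ to be the canonical super double-dual identification (\ref{eq:dual}), given by $\la \Phi(v), f\ra = (-1)^{|v||f|}\la f, v\ra$. This is an even linear isomorphism of superspaces by finite-dimensionality, so the real content is to verify that it carries the $\B^-$-multiplication $v\cdot_{\B^-}w = (-1)^{|v||w|}vw$ to the multiplication on $(\B^\vee)^\vee$ defined by $\la FG, f\ra = \la F \boxtimes G, \Delta_{\B^\vee}(f)\ra$. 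This reduces to a sign-tracking exercise: after expanding both sides in Sweedler notation $\Delta_{\B^\vee}(f) = \sum f_{(1)}\otimes f_{(2)}$ and using (\ref{eq:outer}) and (\ref{eq:tens power}) to unpack $\boxtimes$, one uses the defining identity $\la f, vw\ra = \sum (-1)^{|f_{(2)}||v|}\la f_{(1)}, v\ra\la f_{(2)}, w\ra$ (which encodes $\Delta_{\B^\vee} = m^\vee$) to collect terms, and shows that the accumulated Koszul signs conspire to produce exactly the factor $(-1)^{|v||w|}$ that appears in the $\B^-$-product.

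For (ii), iterating (\ref{eq:isom3}) produces a natural super vector space isomorphism $\rho: (\A^\vee)^{\otimes d} \equi (\A^{\otimes d})^\vee$. The right-hand side inherits a superalgebra structure from the cosuperalgebra structure on $\A^{\otimes d}$ (which is a tensor product of cosuperalgebras, as recalled in Appendix B), while the left-hand side carries the iterated tensor-product multiplication of the superalgebra $\A^\vee$. Compatibility of $\rho$ with these two products is verified by induction on $d$; the base case $d=1$ is tautological, and the inductive step reduces to a short check relating $\boxtimes$ with the definition of multiplication on the dual of a cosuperalgebra.

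For (iii), I apply (\ref{eq:duality1}) with $M = (\B^\vee)^\vee$ to obtain $\Gamma^d((\B^\vee)^\vee)^\vee \equi S^d((\B^\vee)^{\vee\vee}) \equi S^d(\B^\vee)$, where the second step uses reflexivity of the double dual on superspaces. Dualizing once more yields $S^d(\B^\vee)^\vee \equi \Gamma^d((\B^\vee)^\vee)$, and invoking (i) inside $\Gamma^d$ identifies this with $\Gamma^d(\B^-)$. At each step one must verify that the isomorphisms respect the algebra structures: the product on $S^d(\B^\vee)^\vee$ is dual to the coalgebra structure on $S^d(\B^\vee)$ constructed in Appendix B, while the product on $\Gamma^d(\B^-)$ is inherited from $(\B^-)^{\otimes d}$; this compatibility follows from the naturality of (\ref{eq:duality1}) with respect to the $\boxtimes$-construction, together with (i) and (ii).

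The main obstacle is (i): carefully tracking the Koszul signs in the expansion of $\Phi(v)\cdot\Phi(w)$ and checking that they combine to exactly $(-1)^{|v||w|}$, which is the single sign that distinguishes $\B^-$ from $\B$. Once (i) is in hand, both (ii) and (iii) amount to mechanical unwinding of the definitions of multiplication on dual cosuperalgebras and on symmetric/divided powers.
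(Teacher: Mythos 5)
Your approach is essentially the same as the paper's: for (i) and (ii) both you and the paper take the canonical even isomorphisms (\ref{eq:dual}) and (\ref{eq:tens power}) (the latter is exactly what (\ref{eq:isom3}) specializes to when $N=N'=\k$) and then check compatibility with the multiplications, and for (iii) both reduce to (i) and (ii) via the $S^d$--$\Gamma^d$ duality. The paper's proof of (iii) writes this out directly as $S^d(\A)^\vee = ((\A^{\otimes d})_{\Si_d})^\vee \cong ((\A^{\otimes d})^\vee)^{\Si_d} \cong ((\A^\vee)^{\otimes d})^{\Si_d} = \Gamma^d(\B^-)$, while you route through (\ref{eq:duality1}) applied at $M=(\B^\vee)^\vee$ followed by reflexivity; these are the same identifications in a slightly different order, with neither version actually carrying out the sign bookkeeping you flag as the crux of (i).
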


\begin{proof}
For (i) and (ii), the isomorphisms are given by the canonical even
linear isomorphisms (\ref{eq:dual}) and (\ref{eq:tens power}), respectively.
It is then straightforward to check from the definitions that they are 
indeed superalgebra isomorphisms.  For (iii), one may check from parts 
(i) and (ii) that we have the following superalgebras isomorphisms:
\begin{align*}
S^d(\A)^\vee\ =\ ((\A^{\otimes d})_{\Si_d})^\vee &\cong\ ((\A^{\otimes d})^\vee)^{\Si_d}\\ 
&\cong\ ((\A^\vee)^{\otimes d})^{\Si_d}\ =\ \Gamma^d(\B^-).
\end{align*}
\end{proof}

\subsection{Superschemes}
Let $\salg_\k$ denote the category of all commutative superalgebras and even 
homomorphisms.  Also, let $\ssch_\k$ be the category of superschemes as in 
\cite{BKmodular}.  We may identify $\ssch_\k$ with a full subcategory of the 
category $\Fct(\salg_\k, \mathfrak{sets})$ consisting of all functors from 
$\salg_\k$ to $\mathfrak{sets}$.  An {\em affine superscheme} is a 
representable functor $X=\hom_{\salg_\k}(\k[X], -)$, for some $\k[X]\in \salg_\k$ 
which is called the {\em coordinate ring} of $X$.

Given $M \in \svec_\k$, let $M_a: \salg_\k \rightarrow \mathfrak{sets}$ 
denote the functor defined by 
$$M_a(\A)\ =\ (M\otimes \A)_0$$
for all $\A \in \salg_\k$.  Then $M_a$ is an affine super scheme 
with coordinate ring given 
as follows.  Suppose $N$ is an arbitrary superspace, not necessarily 
finite dimensional.  Then we may identify $M^\vee\otimes N$ with 
$\Hom_\k(M,N)$ by setting 
$$ (f\otimes w)(v)\ =\ (-1)^{|w| |v|} \la f, v \ra w \qquad (v\in M, w\in N, f\in M^\vee).$$
Then, for any $\A \in \salg_\k$, we have
\begin{align*}
M_a(\A) &=\ (M\otimes \A)_0 =\  ((M^\vee)^\vee \otimes \A)_0 \\
&=\  \Hom_\k(M^\vee, \A)_0\  =\  \hom_{\salg_\k}(S^*(M^\vee), \A).
\end{align*}
Hence $M_a$ is an affine superscheme with $\k[M_a] = S^*(M^\vee)$.

Now suppose $\mathcal{B}$ is an associative superalgebra.  Let 
$V,W\in {}_{\mathcal{B}}\smod$ and $\A \in \salg_\k$.  Then it 
may be checked that formula (\ref{eq:outer}) gives the following 
isomorphisms:
\begin{equation}\label{eq:tensor1}
\Hom_{\B}(V,W)\otimes \A\ \simeq\ \Hom_{{B}\otimes \A}(V\otimes \A, W\otimes \A),
\end{equation}
where $\A$ is viewed as a supermodule over itself with respect to left multiplication.

Let $End_{\B,V}$ denote the functor in $\Fct(\salg_\k,\mathfrak{sets})$
 such that $End_{\B,V}(\A)$ consists of the even $\B\otimes\A$-linear 
 endomorphisms from $V\otimes \A$ to itself.  Then, by identifying the 
 left and right hand sides of (\ref{eq:tensor1}), we see that 
 $End_{\B,V}= (\End_\B(V))_a$.  So that $End_{\B,V}$ is an affine 
 superscheme with $$\k[End_{\B,V}] = S^*(\End_\B(V)^\vee).$$  
Since $\End_\B(V)$ is a superalgebra, we may regard 
$\k[End_{\B,V}]$ as a cosuperalgebra via the map 
$\widetilde{\Delta}$ described above.

\subsection{Supergroups}
A {\em supergroup} is defined to be a functor $G$ from the category 
$\salg_\k$ to the category $\mathfrak{groups}$.  An {\em algebraic 
supergroup} is a supergroup $G$ which is also an affine superscheme, 
when viewed as a functor from $\salg_\k$ to $\mathfrak{sets}$, such 
that the coordinate ring $\k[G]$ is finitely generated.  In this case, $\k[G]$ 
has a canonical structure of  Hopf superalgebra.  In particular, the 
comultiplication $\Delta: \k[G] \rightarrow \k[G]\otimes \k[G]$ and counit 
$E: \k[G] \rightarrow G$ are defined, respectively, as the comorphisms 
of the multiplication and the unit of $G$.  

Suppose $\B$ is an associative superalgebra and $V\in {}_\B\smod$.
Let $GL_{\B,V}$ denote the subfunctor of $End_{\B,V}$ such that 
$GL_{\B,V}(\A)$ is the set of all even $\B\otimes \A$-linear 
{\em automorphisms} of $V\otimes \A$.  Then $GL_{\B,V}$ is an 
algebraic supergroup, and $\k[End_{\B,V}] = S^*(\End_\B(V)^\vee)$
is a subcoalgebra of $\k[GL_{\B,V}]$ with respect to the comultiplication
$\widetilde{\Delta}$ defined above.

\begin{example}
\begin{itemize}
\item[(i)]  Suppose $m,n$ are nonnegative integers. We use the notation 
$$Mat_{m|n}= End_{\k,\k^{m|n}}\quad  \mbox{and}\quad GL(m|n) = GL_{\k, \k^{m|n}}.$$
If $\A \in \salg_\k$, then 
$Mat_{m|n}(\A)$ may be identified with the set of all matrices of the form
\begin{equation}\label{eq:GL}\left( \begin{array}{cc}
A& B \\
C & D 
\end{array} \right), \end{equation}
where: $A$ is an $\A_0$-valued $m\times m$-matrix, $B$ is an $\A_1$-valued 
$m\times n$-matrix, $C$ is an $\A_1$-valued $n\times m$-matrix, and $D$ is 
an $\A_0$-valued $n\times n$-matrix.  The matrix (\ref{eq:GL}) corresponds to 
an even (resp. odd) linear operator if $B$ and $C$ (resp. $A$ and $D$) are 
both zero.  From \cite[Lemma 1.7.2 ]{Leites}, it follows that $GL(m|n,\A)$ 
consists of all matrices (\ref{eq:GL}) such that $\det(A)\det(D) \neq 0$.  

Let $M= \k^{m|n}$. If $f \in \End_\k(M)$, we may decompose $f = f_0 + f_1$, 
where $f_0$ is even and $f_1$ is odd. Let $\det \in S^{m+n}(\End_\k(M)^\vee)$ 
denote the element such that: for all $f\in \End_\k(M)$, $\det(f) = \det(\bar{f_0})$, 
where the latter is the usual determinant of the induced linear operator 
$\bar{f_0}:\overline{M} \rightarrow \overline{M}$ of ordinary vector spaces.
Then $GL(m|n)$ is an affine subsuperscheme of $Mat_{m|n}$, and $\k[GL(m|n)]$ 
is the localization of the coordinate ring $\k[Mat_{m|n}]$ at the element $\det$.
\medskip

\item[(ii)]  Suppose $n$ is a nonnegative integer, and let 
$V=\U_l(1)^{\oplus n} \in {}_{\C(1)}\smod$.  Then we write 
$$Mat_n = End_{\C(1),V} \quad \mbox{and} \quad Q(n)= GL_{\C(1),V}.$$
From Example \ref{ex:cliff2}, it follows that $Mat(\A)$ may be identified 
with the set of matrices of the form
\begin{equation}\label{eq:Q}\left( \begin{array}{cc}
S& S' \\
-S' & S 
\end{array} \right), \end{equation}
where $S$ (resp. $S'$) is an $\A_0$-valued (resp. $\A_1$-valued) $n\times n$-matrix.  
The matrix (\ref{eq:Q}) corresponds to an even (resp. odd) linear operator if $S' = 0$ 
(resp. $S=0$).

Then $Q(n,\A)$ consists of all 
{\em invertible} matrices of the form (\ref{eq:Q}).  We may define an 
element $\det \in \k[Mat] = S^*(\End_{\C(1)}(V)^\vee)$ in a 
way analogous to the previous example.
It follows from \cite{BKmodular} that $\k[Q(n)]$ is the localization of 
$\k[Mat]$ at $\det$.
\end{itemize}
\end{example}

A {\em representation} of an algebraic supergroup $G$ is defined to be a 
natural transformation $\eta: G \rightarrow GL_{\k,M}$ for some 
$M \in \svec_\k$ such that $\eta_\A: G(\A) \rightarrow  GL_{\k,M}(\A)$ is a 
group homomorphism for each $\A\in \salg_\k$.  On the other hand, a 
{\em $G$-supermodule}  is defined to be a right cosupermodule for the 
Hopf superalgebra $\k[G]$.  The two notions of supermodule and 
representation are equivalent (cf. \cite{BKmodular}).  In particular, given a 
representation $\eta:G \rightarrow GL_{\B,M}$, there is a corresponding 
{\em structure map}
$$\Delta_M: M \rightarrow M \otimes \k[G],$$
making $M$ into a $G$-supermodule.

\begin{definition}\label{def:polcat}
Suppose $\B$ is a superalgebra and $V\in {}_\B\smod$.  Let $G=GL_{\B,V}$. Then
we say that a representation $\eta: G \rightarrow GL_{\k,M}$ is {\em polynomial} if 
the image of the structure map $\Delta_M$ lies in $M\otimes \k[End_{\B,V}]$. 
We also let  $\pol_d(G)$ denote the category of all {\em homogeneous} polynomial 
representations of degree $d$, which are defined to be the representations $M$  
such that the image of $\Delta_M$ is contained in 
$\k[End_{\B,V}]_d = S^d(\End_\B(V)^\vee)$.  Notice that we have
$$\pol_d(G)\ =\ \cosmod_{\k[End_{\B,V}]_d}$$
since $S^d(\End_\B(V)^\vee)$ is a subcoalgebra of $S^*(\End_\B(V)^\vee)$.
\end{definition}

\medskip


\end{document}